\tikzset{snake it/.style={decorate, decoration=snake}}
\theoremstyle{plain}
\newtheorem{thm}{Theorem}[section]
\newtheorem{cor}[thm]{Corollary}
\newtheorem{lem}[thm]{Lemma}
\newtheorem{prop}[thm]{Proposition}
\theoremstyle{definition}
\theoremstyle{remark}
\newtheorem{rmk}[thm]{Remark}
\newcommand{\BA}{{\mathbb{A}}}
\newcommand{\BC}{{\mathbb{C}}}
\newcommand{\BF}{{\mathbb{F}}}
\newcommand{\BG}{{\mathbb{G}}}
\newcommand{\BK}{{\mathbb{K}}}
\newcommand{\BN}{{\mathbb{N}}}
\newcommand{\BP}{{\mathbb{P}}}
\newcommand{\BQ}{{\mathbb{Q}}}
\newcommand{\BT}{{\mathbb{T}}}
\newcommand{\BZ}{{\mathbb{Z}}}
\newcommand{\CC}{{\mathcal C}}
\newcommand{\CD}{{\mathcal D}}
\newcommand{\CE}{{\mathcal E}}
\newcommand{\CF}{{\mathcal F}}
\newcommand{\CG}{{\mathcal G}}
\newcommand{\CL}{{\mathcal L}}
\newcommand{\CO}{{\mathcal O}}
\newcommand{\CT}{{\mathcal T}}
\newcommand{\CU}{{\mathcal U}}
\newcommand{\CV}{{\mathcal V}}
\DeclareFontFamily{OT1}{rsfs}{}
\DeclareFontShape{OT1}{rsfs}{n}{it}{<-> rsfs10}{}
\DeclareMathAlphabet{\curly}{OT1}{rsfs}{n}{it}
\newcommand{\git}{\mathbin{
  \mathchoice{/\mkern-6mu/}
    {/\mkern-6mu/}
    {/\mkern-5mu/}
    {/\mkern-5mu/}}}
\begin{document}
\title[Cohomology of the moduli of Higgs bundles ]{Cohomology of the moduli of Higgs bundles \\ on a curve via positive characteristic}
\date{\today}

\author[M. A. de Cataldo]{Mark Andrea de Cataldo}
\address{Stony Brook University}
\email{mark.decataldo@stonybrook.edu}

\author[D. Maulik]{Davesh Maulik}
\address{Massachusetts Institute of Technology}
\email{maulik@mit.edu}

\author[J. Shen]{Junliang Shen}
\address{Massachusetts Institute of Technology}
\email{jlshen@mit.edu}
\address{Yale University}
\email{junliang.shen@yale.edu}

\author[S. Zhang]{Siqing Zhang}
\address{Stony Brook University}
\email{siqing.zhang@stonybrook.edu}

\begin{abstract}
For a curve of genus $g$ and any two degrees coprime to the rank, we construct a family of ring isomorphisms parameterized by the complex Lie group $\mathrm{GSp}(2g),$ between the cohomology of the moduli spaces of stable Higgs bundles which preserve the perverse filtrations. As a consequence, we prove two structural results concerning the cohomology of Higgs moduli which are predicted by the P=W Conjecture in Non-Abelian Hodge Theory: (1) Galois conjugation for character varieties preserves the perverse filtrations for the corresponding Higgs moduli spaces. (2) The restriction of the Hodge--Tate decomposition for a character variety to each piece of the perverse filtration for the corresponding Higgs moduli space also gives a decomposition.

Our proof uses reduction to positive characteristic and relies on the non-abelian Hodge correspondence in characteristic $p$ between Dolbeault and de Rham moduli spaces.
\end{abstract}

\maketitle

\setcounter{tocdepth}{1} 

\tableofcontents
\setcounter{section}{-1}

\section{Introduction}


\subsection{Cohomology of the moduli of Higgs bundles} 
Let $C$ be a nonsingular irreducible projective curve over the complex numbers $\BC$ of genus $g \geq 2$. Throughout the paper, we fix a positive integer $n >0$. For any positive integer $d$ coprime to $n$, we denote by $M_{\mathrm{Dol}}(C,d)$ the moduli of (slope-)stable Higgs bundles of rank $n$ and degree $d$:
\[
(\CE, \theta):  \CE \xrightarrow{\theta} \CE \otimes \Omega_C, \quad \quad \mathrm{rank}(\CE) = n, \quad \mathrm{deg}(\CE) = d.
\]
It is a nonsingular quasi-projective variety which admits a Lagrangian fibration
\[
h: M_{\mathrm{Dol}}(C,d) \to A: = \oplus_{i=1}^n H^0(C, \Omega^{\otimes i}_C), \quad (\CE,\theta) \mapsto \mathrm{char}(\theta) \in A
\]
known as the Hitchin fibration \cite{Hit,Hit1}. Here $\mathrm{char}(\theta)$ stands for the coefficients of the characteristic polynomial of the Higgs field $\theta$, \emph{i.e.,}
\[
\mathrm{char}(\theta) = (a_1, a_2, \dots, a_n), \quad a_i: = \mathrm{trace}(\wedge^i \theta) \in H^0(C, \Omega^{\otimes i}_C).
\]
The singular cohomology of $M_{\mathrm{Dol}}(C,d)$ carries an increasing filtration 
\[
P_0H^*(M_{\mathrm{Dol}}(C,d), \BC) \subset P_1H^*(M_{\mathrm{Dol}}(C,d), \BC)  \subset  \cdots \subset H^*(M_{\mathrm{Dol}}(C,d), \BC)
\]
called the \emph{perverse filtration} which is governed by the topology of the Hitchin fibration \cite{dCHM1}.

The cohomology, along with the perverse filtration, has been the subject of intense study in recent years, motivated by the so-called P=W Conjecture in Non-Abelian Hodge Theory, which we recall in Section \ref{Sec0.3}.

\subsection{Main results}\label{s0.4}

Before stating our main results, we first recall a natural set of generators for the cohomology algebra of $M_{\mathrm{Dol}}(C,d)$.

Let $(\CU, \theta)$ be a universal family over $C \times M_{\mathrm{Dol}}(C,d)$. A natural way to construct cohomology classes on $M_{\mathrm{Dol}}(C,d)$ is to integrate the $k$-th component $\mathrm{ch}_k(\CU)$ over a class $\gamma$ on $C$, so that the resulting class is determined by $k \in \BN$ and $\gamma  \in H^*(C, \BC)$. While the choice of a universal family is not unique, we may normalize the universal family to get tautological classes
\begin{equation}\label{taut}
c(\gamma, k) \in H^*(M_{\mathrm{Dol}}(C,d), \BC), \quad \gamma \in H^*(C, \BC),\quad k \in \BN
\end{equation}
so that they do not depend on the choice of a universal family; see \cite[Section 0.3]{dCMS} and Section \ref{1.1}. 
Markman \cite{Markman} proved that the tautological classes (\ref{taut}) generate $H^*(M_{\mathrm{Dol}}(C,d), \BC)$ as a $\BC$-algebra.

Let $\Lambda: = H^1(C, \BZ)$ be the lattice with the intersection pairing, and let $\Lambda_\BK$ be $\Lambda \otimes \BK$ for $\BK$ a field.
We consider the similitude group 
\begin{equation}\label{gsp}
\mathrm{GSp}(\Lambda_\BC) = \Big{\{}A\in \mathrm{GL}_{2g}(\Lambda_\BC)|~~~ \exists \lambda_A\in \BC^{\ast}, \langle Aw_1, Aw_2\rangle = \lambda_A \langle w_1, w_2\rangle, ~~\forall w_1, w_2 \in \Lambda_\BC \Big{\}}
\end{equation}
which is a subgroup of $\mathrm{GL}_{2g}(\Lambda_\BC)$. Here,
$\langle -,-\rangle$ is the intersection pairing on the vector space $\Lambda_\BC$, and $\lambda_A$ is a non-zero constant uniquely determined by $A$. The symplectic group $\mathrm{Sp}(\Lambda_\BC) \subset \mathrm{GSp}(\Lambda_\BC)$ is characterized by $\lambda_A = 1$. The natural action of $\mathrm{GSp}(\Lambda_\BC)$ on $\Lambda_\BC$ admits an extension to the total cohomology of the curve 
\[
H^*(C, \BC) = H^0(C, \BC) \oplus \Lambda_\BC \oplus H^2(C, \BC)
\]
which acts as $\mathrm{id}$ on $H^2(C, \BC)$ and as multiplication by $\lambda_A$ on $H^0(C, \BC)$.  We note that this choice of extension of the $\mathrm{GSp}(\Lambda_\BC)$-action to $H^*(C, \BC)$ is not the natural extension compatible with cup product; nevertheless, this choice is essential for our purpose in view of Theorem \ref{thm3.1}; see Remark \ref{rmk3.1}. For $A\in \mathrm{GSp}(\Lambda_\BC)$ and $\gamma \in H^*(C, \BC)$, we use $A\gamma \in H^*(C, \BC)$ to denote the class given by the action of $A$ on $\gamma$. 

The main result of this paper is the following:

\begin{thm}\label{main_thm}

\begin{enumerate}
\item[(a)]
For each integer $d$ coprime to $n$, there exists an action of $\mathrm{GSp}(\Lambda_\BC)$ 
on $H^*(M_\mathrm{Dol}(C,d), \BC)$ by $\BC$-algebra automorphisms.  For each $A \in \mathrm{GSp}(\Lambda_\BC)$, the corresponding automorphism $G_A$ acts on tautological generators compatibly with the action on $H^*(C, \BC)$:
\[
G_A(c(\gamma, k )) = c(A\gamma, k).
\]
Furthermore, it preserves the perverse filtrations:
\[
G_A\left(P_kH^i(M_\mathrm{Dol}(C,d), \BC)\right) = P_kH^i(M_\mathrm{Dol}(C,d'), \BC), \quad \forall k,i.
\]
\item[(b)]
Given two integers $d$ and $d'$ coprime to $n$, there exists a $\BC$-algebra isomorphism
\[
\phi_{d,d'}: H^*(M_\mathrm{Dol}(C,d), \BC) \xrightarrow{\simeq} H^*(M_\mathrm{Dol}(C,d'), \BC)
\]
which acts on tautological generators via the relation
\[
\phi_{d,d'}(c(\gamma, k )) = c(\gamma, k).
\]
Furthermore, this isomorphism intertwines the actions of $\mathrm{GSp}(\Lambda_\BC)$ constructed in part (a) for $d$ and $d',$ and also preserves the perverse filtrations.
\end{enumerate}
\end{thm}

We note that the action described above is uniquely characterized by its action on the tautological generators. However, it is not obvious that such an action exists, \emph{i.e.}, that it is compatible with the relations between these tautological generators or preserves the perverse filtration.

In part (a), if we restrict to the arithmetic subgroup $\mathrm{Sp}(\Lambda) \subset \mathrm{GSp(\Lambda_\BC)}$, the corresponding action can be constructed geometrically as we describe next.  If we take the universal curve
$\mathcal{C} \rightarrow \mathcal{M}_g$, then the monodromy of the corresponding family of Higgs moduli spaces defines an action of
the mapping class group $\pi_1(\mathcal{M}_g)$ on $H^*(M_\mathrm{Dol}(C,d), \BC)$.  This action is compatible with perverse filtrations by the main result of \cite{dCM}.  Finally, since the kernel of the symplectic representation of the mapping class group
has no effect on  the tautological generators,  this action descends to an action of $\mathrm{Sp}(\Lambda)$.

We do not know a direct construction of the assignment $A \mapsto G_A$ for $A \in \mathrm{GSp}(\Lambda_\BC).$  Instead, our approach is to use techniques from positive characteristic to construct some of these $G_A$, and to then couple this construction with  density arguments.

\subsection{Implications for the P=W Conjecture}\label{Sec0.3}

The motivation for our results in this paper comes from the P=W Conjecture in Non-Abelian Hodge Theory, proposed in 2010 by  de Cataldo, Hausel, and Migliorini \cite{dCHM1}.  This conjecture predicts that the topology of the Hitchin fibration (particularly the perverse filtration) interacts surprisingly, via \emph{Non-Abelian Hodge Theory}, with the Hodge theory of the corresponding character variety (particularly the weight filtration).

More precisely, we consider the character variety $M_{\mathrm{B}}(C,d)$ of rank $n$ and degree $d$:
\begin{equation*}
M_{\mathrm{B}}(C,d) := \Big{\{}a_k, b_k \in \mathrm{GL}_n,~k=1,2,\dots,g: ~~\prod_{j=1}^g [a_j, b_j] = \zeta_n^d \cdot \mathrm{Id}_n \Big{\}}\git \mathrm{GL}_n,\quad \zeta_n:=e^{\frac{2\pi \sqrt{-1} }{n}},
\end{equation*}
obtained as an affine $\mathrm{GIT}$ quotient with respect to the action by conjugation. Non-Abelian Hodge Theory \cite{Si1994II} (see \cite{HT1} for the twisted case) then induces a diffeomorphism between the moduli spaces $M_\mathrm{Dol}(C,d)$ and $M_{\mathrm{B}}(C,d)$, which identifies their cohomology rings:
\begin{equation}\label{non-ab}
    H^*(M_\mathrm{Dol}(C,d), \BC) = H^*(M_{\mathrm{B}}(C,d), \BC).
\end{equation}
The P=W Conjecture refines (\ref{non-ab}) incorporating the perverse filtration for $M_\mathrm{Dol}(C,d)$ and the mixed Hodge structure for $M_{\mathrm{B}}(C,d)$; it predicts that
\begin{equation}\label{P=W}
P_k H^*(M_\mathrm{Dol}(C,d), \BC) = W_{2k} H^*(M_{\mathrm{B}}(C,d), \BC) =  W_{2k+1} H^*(M_{\mathrm{B}}(C,d), \BC), \quad \forall k
\end{equation}
with $W_\bullet$ the weight filtration.

The P=W Conjecture makes the following predictions for the perverse filtration, which we will deduce as consequences of our main theorem.

\subsubsection{Galois}
The first prediction comes from Galois conjugation on the character variety. 

Fix $n$, and consider two integers $d,d'$ coprime to $n$. The Betti moduli spaces $M_{\mathrm{B}}(C,d)$ and $M_{\mathrm{B}}(C,d')$ are Galois conjugate via an automorphism of $\BQ[\zeta]$ sending $\zeta^d$ to $\zeta^{d'}$ \cite[Section 4]{Survey}. Galois conjugation induces an isomorphism preserving the weight filtrations
\begin{equation}\label{Galoi0}
\widetilde{\phi}_{d,d'}:H^*(M_{\mathrm{B}}(C,d) , \BC) \xrightarrow{\simeq} H^*(M_{\mathrm{B}}(C,d'),\BC),\quad W_k \mapsto W_k.
\end{equation}
By passing through the non-abelian Hodge isomorphism (\ref{non-ab}), this induces a ring isomorphism between the cohomology groups of the Dolbeault moduli spaces
\begin{equation}\label{Galoi1}
\widetilde{\phi}_{d,d'}: H^*(M_\mathrm{Dol}(C,d), \BC) \xrightarrow{\simeq} H^*(M_\mathrm{Dol}(C,d'), \BC),
\end{equation}
which, assuming the P=W Conjecture, should preserve the respective perverse filtrations.  

The first consequence of our main theorem is that this is indeed the case:
\begin{thm}\label{thm0.2}
Galois conjugation $\widetilde{\phi}_{d,d'}$  (\ref{Galoi1}) preserves perverse filtrations:
\begin{equation}\label{galois}
    \widetilde{\phi}_{d,d'}\left(
    P_kH^i(M_\mathrm{Dol}(C,d), \BC)\right) = P_kH^i(M_\mathrm{Dol}(C,d'), \BC), \quad \forall k,i.
\end{equation}
\end{thm}

We will show this in Sections \ref{sec1.2} and \ref{5.2} by matching $\widetilde{\phi}_{d,d'}$ (\ref{Galoi1}) and $\phi_{d,d'}$ of Theorem \ref{main_thm} (b). The following corollary of Theorem \ref{thm0.2} is immediate. 

\begin{cor}[]
For fixed rank $n$, if the P=W Conjecture holds for some degree $d$ coprime to $n$, then it holds for every degree $d'$ coprime to $n$.
\end{cor}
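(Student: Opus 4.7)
The plan is to transport the P=W identities from degree $d$ to degree $d'$ using the Galois isomorphism, exploiting the fact that it preserves the weight filtration on the Betti side (by construction, as recalled in Section \ref{0.2.2}) and the perverse filtration on the Dolbeault side (by Theorem \ref{thm0.2}).

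More precisely, assume the P=W conjecture holds for $(n,d)$, so that
\[
P_k H^*(M_\mathrm{Dol}(n,d), \BC) = W_{2k} H^*(M_{\mathrm{B}}(n,d), \BC) = W_{2k+1} H^*(M_{\mathrm{B}}(n,d), \BC)
\]
for all $k$, under the non-abelian Hodge identification (\ref{non-ab}). The first step is to apply the Galois isomorphism $\mathrm{Gal}: H^*(M_{\mathrm{B}}(n,d),\BC)\xrightarrow{\simeq} H^*(M_{\mathrm{B}}(n,d'),\BC)$, which by Section \ref{0.2.2} satisfies $\mathrm{Gal}(W_k)=W_k$. Consequently, applying $\mathrm{Gal}$ to the right-hand side of the P=W identity yields
\[
\mathrm{Gal}\bigl(W_{2k} H^*(M_{\mathrm{B}}(n,d), \BC)\bigr) = W_{2k} H^*(M_{\mathrm{B}}(n,d'), \BC),
\]
and similarly for $W_{2k+1}$.

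The second step is to translate this back to the Dolbeault side via non-abelian Hodge, where $\mathrm{Gal}$ becomes $\widetilde{\mathrm{Gal}}: H^*(M_\mathrm{Dol}(n,d),\BC)\xrightarrow{\simeq} H^*(M_\mathrm{Dol}(n,d'),\BC)$. By Theorem \ref{thm0.2}, $\widetilde{\mathrm{Gal}}$ preserves the perverse filtration, so
\[
\widetilde{\mathrm{Gal}}\bigl(P_k H^*(M_\mathrm{Dol}(n,d), \BC)\bigr) = P_k H^*(M_\mathrm{Dol}(n,d'), \BC).
\]
Combining the two displays with the assumed P=W equality for degree $d$ immediately gives
\[
P_k H^*(M_\mathrm{Dol}(n,d'), \BC) = W_{2k} H^*(M_{\mathrm{B}}(n,d'), \BC) = W_{2k+1} H^*(M_{\mathrm{B}}(n,d'), \BC),
\]
which is the P=W conjecture for degree $d'$.

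There is no real obstacle here: once Theorem \ref{thm0.2} is in hand, the corollary is a purely formal consequence of the compatibility of $\widetilde{\mathrm{Gal}}$ (a filtered isomorphism for $P_\bullet$) with $\mathrm{Gal}$ (a filtered isomorphism for $W_\bullet$) under non-abelian Hodge. The only point requiring a small remark is that one should use both halves of the P=W identity, since the Galois isomorphism preserves $W_{2k}$ and $W_{2k+1}$ separately, matching the two equalities in (\ref{P=W}) on the $d'$-side.
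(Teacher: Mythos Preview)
Your argument is correct and matches the paper's approach exactly: the paper simply states that the corollary is an immediate consequence of Theorem \ref{thm0.2}, and your write-up spells out precisely this immediate deduction, using that $\mathrm{Gal}$ preserves $W_\bullet$ and $\widetilde{\mathrm{Gal}}$ preserves $P_\bullet$.
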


\subsubsection{Weight decomposition}

The second prediction of Theorem \ref{main_thm} relates the perverse filtration with the weight decomposition of the character variety.

The Hodge structure for $M_{\mathrm{B}}(C,d)$ was shown to be of Hodge--Tate type \cite{Shende}, and we have a canonical
Hodge--Tate decomposition
\begin{equation}\label{Hodge-Tate}
H^*(M_{\mathrm{B}}(C,d), \BC) = \bigoplus_{i,k} \mathrm{Hdg}^i_k, \quad \mathrm{Hdg}^i_k:= W_{2k}\cap F^k \left(H^i(M_{\mathrm{B}}(C,d), \BC)\right).
\end{equation}
In particular, we have a canonical decomposition $W_{2k}H^i = \oplus_{k' \leq k} \mathrm{Hdg}^i_{k'}$ for the weight filtration.  
The P=W Conjecture implies immediately the same equality with $P_k$ replacing $W_{2k}.$

Using our main Theorem \ref{main_thm}, we show the following weaker  compatibility of the perverse filtration on the Dolbeault side, with the
Hodge--Tate decomposition (\ref{Hodge-Tate}) on the Betti side.

\begin{thm}\label{thm0.4}
 We have the following compatibility between 
 the decomposition (\ref{Hodge-Tate})
 and the perverse filtration:
 \[
P_kH^i(M_{\mathrm{Dol}}(C,d), \BC) = \bigoplus_{k' } \left(P_kH^i(M_{\mathrm{Dol}}(C,d), \BC) \right) \cap \mathrm{Hdg}^{i}_{k'}.
\]
\end{thm}
Here we use the non-abelian Hodge isomorphism (\ref{non-ab}) to transfer the decomposition (\ref{Hodge-Tate}) to $H^*(M_{\mathrm{Dol}}(C,d), \BC).$
In general, the restriction of a decomposition of a vector space $Q = \oplus_i Q_i$ to a sub-vector space $Q' \subset Q$ may fail to be a decomposition, \emph{i.e.}, in general $Q'\neq \oplus_i (Q' \cap Q_i)$.
We prove that instead this is the case for the subspaces of the perverse filtration in Section \ref{5.2}, by relating the $\BG_m$ which induces the Hodge-Tate decomposition with the central $\BG_m$ in the $\mathrm{GSp}$-action we construct.

Finally, the perverse filtration admits a natural splitting, known as the first Deligne splitting. In Section \ref{5.3}, we explain  that the operators of our main theorem are compatible with this splitting; see Corollary \ref{firstdsp}.

\subsection{Acknowledgements}
We thank the anonymous referee for careful
reading and numerous useful suggestions. M.A. de Cataldo is  partially supported  by a Simons Fellowship in Mathematics, and by  NSF grant DMS-1901975; he thanks the I.A.S. in  Princeton for the perfect working conditions during a revision of this paper.
J. Shen is supported by  NSF grant DMS-2000726 and  NSF grant DMS-2134315. S. Zhang is partially supported by NSF grant DMS-1901975.

\section{Tautological classes}

\subsection{Tautological classes}\label{1.1}
Let $C$ be a complex curve as in the introduction; we assume that $(n,d) = 1$. We first review the construction of the tautological classes
\[
c(\gamma, k) \in H^*(M_{\mathrm{Dol}}(C,d), \BC), \quad \gamma \in H^*(C, \BC), ~~~ k \in \BZ_{\geq 0}
\]
as integrals of normalized classes, following \cite{dCMS}.
In this paper, as it is standard, when we push-forward cohomology classes,
it is always for proper l.c.i. morphisms, and
we use the proper push-forward  in cohomology stemming from  Chow bi-variant theory coupled with the cycle class map,
as in \cite{Fulton}.

Let
\[
p_C: C \times M_{\mathrm{Dol}}(C,d) \to C,\quad  p_M: C \times M_{\mathrm{Dol}}(C,d) \to  M_{\mathrm{Dol}}(C,d)\]
be the projections. We say that a triple $(\CU, \theta, \alpha)$ is a {\em twisted universal family} over $C \times M_{\mathrm{Dol}}(C,d)$, if $(\CU, \theta)$ is a universal family and 
\begin{equation}\label{form0}
\alpha = p_C^\ast \alpha_C +p_M^\ast \alpha_{M} \in H^2(C\times M_{\mathrm{Dol}}(C,d), \BC)
\end{equation}
with $\alpha_C \in H^2(C, \BC)$ and $\alpha_M \in H^2(M_{\mathrm{Dol}}(C,d), \BC)$. For a twisted universal family $(\CU, \theta, \alpha)$, we define the {\em twisted Chern character} $\mathrm{ch}^\alpha(\CU)$ as
\begin{equation*}\label{uni_eqn1}
\mathrm{ch}^\alpha(\CU) = \mathrm{ch}(\CU) \cup \mathrm{exp}(\alpha) \in H^*(C\times M_{\mathrm{Dol}}(C,d), \BC),
\end{equation*}
and we denote by 
\[
\mathrm{ch}_k^\alpha(\CU) \in H^{2k}(C\times M_{\mathrm{Dol}}(C,d),\BC)
\]
its degree $2k$ part. The class $\mathrm{ch}^\alpha(\CU)$ is called \emph{normalized} if \begin{equation}\label{Condition0}
\mathrm{ch}^\alpha_1(\CU)|_{x \times M_{\mathrm{Dol}}(C,d)} =0 \in H^2(M_{\mathrm{Dol}}(C,d), \BC), \qquad \mathrm{ch}^\alpha_1(\CU)|_{C \times y} =0 \in H^2(C, \BC),
\end{equation}
with $x \in C$ and $y\in M_{\mathrm{Dol}}(C,d)$ points. For two universal families $(\CU_1, \theta_1)$ and $(\CU_2, \theta_2)$, there is a line bundle $\CL$ pulled back from $M_{\mathrm{Dol}}(C,d)$ so that $\CU_1 = \CU_2 \otimes \CL$. By the condition (\ref{Condition0}), normalized classes do not depend on the choice of a universal family. For any $\gamma \in H^i(C, \BQ)$, the \emph{tautological class} $c(\gamma ,k)$ is defined by integrating the degree $k$ normalized class 
\begin{equation}\label{taut_class}
c(\gamma ,k) := \int_\gamma \mathrm{ch}_k^\alpha(\CU) = {p_{M}}_\ast( p_C^\ast \gamma \cup \mathrm{ch}^\alpha_k(\CU)) \in H^{i+2k-2}(M_{\mathrm{Dol}}(C,d), \BC).
\end{equation}

There is an alternative way in \cite{HT1} to obtain canonically defined classes in $H^*(M_{\mathrm{Dol}}(C,d), \BC)$ which we briefly review; this will only be used in Section \ref{sec1.2} to characterize the action of Galois conjugation on the tautological classes of the character varieties. We let $\CT$ be the projective bundle $\BP(\CU)$ associated with \emph{any} universal family $(\CU, \theta)$. If we assume that $\xi_1, \dots, \xi_n$ are the Chern roots of $\CU$, then the Chern roots for $\CT$ are 
\[
\xi_1 - \overline{\xi},~~ \xi_2 - \overline{\xi},~~ \dots,~~ \xi_n - \overline{\xi},
\]
with $\overline{\xi}$ the average of the $\xi_i$. We may consider Chern classes $c_k(\CT)$ and Chern characters $\mathrm{ch}_k(\CT)$ via the Chern roots. In particular $c_1(\CT) = 0$. For any twisted universal family $(\CU, \theta, \alpha)$, we have
\begin{equation}\label{eqn10}
\mathrm{ch}^\alpha (\CU) = \mathrm{ch}(\CT)\cup \mathrm{exp}\left( \frac{c_1(\CU)}{n} + \alpha \right),
\end{equation}
and for a normalized class $\mathrm{ch}^\alpha(\CU)$ we have
\[
\mathrm{ch}^\alpha_1(\CU) = c_1(\CU) + n \alpha \in H^1(C, \BC) \otimes H^1(M_{\mathrm{Dol}}(C,d), \BC).
\]
Therefore the degree 1 tautological classes 
\begin{equation}\label{H^1}
c(\gamma, 1) \in H^1(M_{\mathrm{Dol}}(C,d), \BC), \quad \gamma \in H^1(C, \BC)
\end{equation}
recover all the classes in $H^1(M_{\mathrm{Dol}}(C,d), \BC)$, and by (\ref{eqn10}) any tautological class (\ref{taut_class}) can be expressed in terms of (\ref{H^1}) and
\begin{equation}\label{taut_T}
\int_\gamma c_k(\CT) \in H^*(M_{\mathrm{Dol}}(C,d), \BC), \quad \quad \gamma \in H^*(C, \BC),~~~ k\geq 2.
\end{equation}

\begin{rmk}
Recall that we have the product formula (see the equation following Remark 2.4.4 in \cite{dCHM1}):
\[
H^*(M_{\mathrm{Dol}}(C,d), \BC) = H^*(\mathrm{Jac}(C), \BC) \otimes H^*(\hat{M}_{\mathrm{Dol}}(C,d), \BC)
\]
with $\hat{M}_{\mathrm{Dol}}(C,d)$ the $\mathrm{PGL}_n$-Higgs moduli space and $\mathrm{Jac}(C)$ the Jacobian of the curve $C$. The classes (\ref{H^1}) generate the first factor; in turn this is generated by the tautological classes associated with the normalized Poincar\'e line bundle over $C\times \mathrm{Jac}(C)$. The classes (\ref{taut_T}) generate the second factor.
\end{rmk}

\subsection{Character varieties}\label{sec1.2}
In \cite{HT1}, Hausel and Thaddeus described the tautological classes directly on the character variety $M_{\mathrm{B}}(C,d)$ side. By their description, these classes are preserved under Galois conjugation (\ref{Galoi0}).


\begin{prop}\label{Prop1.2} For $d$ and $d'$ coprime to $n$, a morphism of $\BC$-algebras
\[
H^*(M_\mathrm{Dol}(C,d), \BC) \xrightarrow{\simeq} H^*(M_\mathrm{Dol}(C,d'), \BC)
\]
is induced by Galois conjugation (\ref{Galoi0}) if and only if it preserves the tautological classes (\ref{taut_class})
\[
c(\gamma, k)  \in H^*(M_{\mathrm{Dol}}(C,d), \BC) \mapsto c(\gamma, k)  \in H^*(M_{\mathrm{Dol}}(C,d'), \BC).
\]
\end{prop}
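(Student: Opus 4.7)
The plan is to prove the two directions separately, using the Hausel--Thaddeus Betti-side construction of the tautological classes summarized in Section \ref{sec1.2} together with Markman's theorem that the classes $c(\gamma,k)$ generate $H^*(M_\mathrm{Dol}(n,d),\BC)$ as a $\BC$-algebra.

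For the ``only if'' direction, I would trace through the construction of the principal $\mathrm{PGL}_n$-bundle $\CT'_d$ and verify that it is natural under field automorphisms of $\BC$. Concretely, the quotient defining $\CT'_d$ depends on the cocycle relation $\prod_{j=1}^g[a_j,b_j] = \zeta_n^d \cdot \mathrm{Id}_n$ and on the $\pi_1(C)\times \mathrm{GL}_n$-action, both of which are defined by polynomial equations over $\BQ[\zeta_n]$; hence an automorphism of $\BC$ sending $\zeta_n^d$ to $\zeta_n^{d'}$ carries the pair $(M_\mathrm{B}(n,d),\CT'_d)$ to $(M_\mathrm{B}(n,d'),\CT'_{d'})$, and consequently carries the Chern classes $c_k(\CT'_d)$ to $c_k(\CT'_{d'})$. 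Pairing with a class $\gamma$ on $C$ and integrating along $C$ then shows that the classes (\ref{taut_T}) are preserved under $\widetilde{\mathrm{Gal}}$. For the remaining generators in $H^1(M_\mathrm{Dol}(n,d),\BC)$, namely the classes $c(\gamma,1)$ of (\ref{H^1}), I would use that by the product formula they come from the Jacobian factor, whose Betti counterpart admits an analogous description in terms of the abelianization and is manifestly Galois-equivariant. Invoking (\ref{eqn10}) to express every $c(\gamma,k)$ as a polynomial in classes of types (\ref{H^1}) and (\ref{taut_T}), one concludes $\widetilde{\mathrm{Gal}}(c(\gamma,k)) = c(\gamma,k)$.

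For the ``if'' direction, Markman's result reduces everything to a formal computation: any $\BC$-algebra homomorphism between $H^*(M_\mathrm{Dol}(n,d),\BC)$ and $H^*(M_\mathrm{Dol}(n,d'),\BC)$ is determined by its values on the tautological classes. Since by the first direction $\widetilde{\mathrm{Gal}}$ sends $c(\gamma,k) \mapsto c(\gamma,k)$, any other morphism doing the same must equal $\widetilde{\mathrm{Gal}}$.

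The main technical obstacle is checking carefully that the normalization used on the Dolbeault side to define $c(\gamma,k)$ matches the classes extracted from $\CT'_d$ on the Betti side, since the Hausel--Thaddeus construction naturally gives a projective bundle rather than an honest universal sheaf. Once one verifies that $\CT'_d$ corresponds to $\CT = \BP(\CU)$ under non-abelian Hodge and that the normalization condition $c_1(\CT)=0$ is preserved by both sides of the comparison, the assertion reduces cleanly to the Galois-equivariance of the Hausel--Thaddeus construction combined with Markman's generation.
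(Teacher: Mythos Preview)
Your proposal is correct and follows essentially the same route as the paper. The paper's proof is a two-line summary: the ``only if'' direction is declared ``clear from the discussion above'' (i.e., the Hausel--Thaddeus description of $\CT'_d$ in Section~\ref{sec1.2} and its evident Galois-equivariance), and the ``if'' direction is Markman's generation theorem. You have simply unpacked that discussion in more detail, including the separate treatment of the $H^1$-classes $c(\gamma,1)$ via the Jacobian factor and the passage through (\ref{eqn10}), which the paper leaves implicit in the phrase ``switching back to the tautological classes.''
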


This proposition was observed by Hausel \cite[Remark 4.8]{Survey}; we give a proof here for the reader's convenience.

\begin{proof}
We first review the construction of Hausel--Thaddeus \cite{HT1}. We consider the map
\[
\mu: \mathrm{GL}_n^{2g} \to \mathrm{GL}_n, \quad (a_1, \dots, a_g, b_1, \dots, b_g) \mapsto \prod_{j=1}^g [a_j, b_j],
\]
so that $M_{\mathrm{B}}(C,d)$ is the geometric quotient of $\mu^{-1}(\zeta_n^d\mathrm{Id}_n)$ by the conjugation action. Now we follow \cite[Section 1]{HT1} to describe the principal $\mathrm{PGL}_n$-bundle $\CT'_d$  corresponding to $\CT$ in Section \ref{1.1}.

Let $\widetilde{C} \to C$ be the universal cover with the natural $\pi_1(C)$-action on $\widetilde{C}$. Then there is a action of the group $\pi_1(C) \times \mathrm{GL}_n$ on the product
\[
\mathrm{PGL}_n \times \mu^{-1}(\zeta_n^d\mathrm{Id}_n) \times \widetilde{C},
\]
given by
\[
(p,g) \cdot(h, \rho, x) = \left([g]\rho(p)h, [g]\rho [g]^{-1}, p\cdot x\right).
\]
Here $[g]$ is the projection of $g\in \mathrm{GL}_n$ to $\mathrm{PGL}_n$, and we view $\rho \in \mu^{-1}(\zeta_n^d\mathrm{Id}_n)$  as a homomorphism $\pi_1(C) \to \mathrm{PGL}_n$. The resulting quotient, denoted by $\CT'_d$, gives the desired $\mathrm{PGL}_n$-principal bundle over the product $M_{\mathrm{B}}(C,d) \times C$. It corresponds to $\CT = \BP(\CU)$ on the Higgs side via the diffeomorphism given by Non-Abelian Hodge Theory; we refer to \cite[Section 5]{HT1} for more details.

The two complex varieties $M_B(C,d)$ and $M_B(C,d')$ can be obtained by base change of a scheme defined over $\BQ[\zeta]$ via two complex embeddings $\BQ[\zeta] \hookrightarrow \BC$; the two embeddings  differ by an automorphism of $\BQ[\zeta]$ sending $\zeta^d$ to $\zeta^{d'}$. Hence we have  identified the cohomology of the two moduli spaces as $\BC$-algebras induced by this automorphism of $\BQ[\zeta]$:
\begin{equation}\label{Gal0}
H^*(M_\mathrm{B}(C,d), \BC) \xrightarrow{\simeq} H^*(M_\mathrm{B}(C,d'), \BC).
\end{equation}
Furthermore, as an immediate consequence of the description of $\CT'_d$, the pairs $(M_{\mathrm{B}}(C,d), \CT'_d)$ and $(M_{\mathrm{B}}(C,d') ,\CT'_{d'})$  correspond via the automorphism $\zeta^d \mapsto \zeta^{d'}$ above. In particular, the isomorphism (\ref{Gal0}) preserves each class (\ref{taut_T}) if we pass through the non-abelian Hodge correspondence. Switching back to the tautological classes $c(\gamma,k)$, the \emph{only if} direction is now clear. The \emph{if} direction follows from Markman's theorem \cite{Markman} that the classes  $c(\gamma,k)$ generate the total cohomology as a $\BC$-algebra.  
\end{proof}

\subsection{Change the degree by $n$}
Let $\CO_C(1)$ be a degree 1 line bundle on the curve $C$. Taking tensor product with $\CO_C(1)$ induces an isomorphism between the moduli spaces
\begin{equation}\label{eq14}
M_{\mathrm{Dol}}(C,d) \xrightarrow{\simeq} M_{\mathrm{Dol}}(C,d+n), \quad (\CE, \theta) \mapsto (\CE \otimes \CO_C(1),\theta).
\end{equation}

\begin{prop}\label{prop1.3}
The isomorphism of the cohomology 
\[
H^*(M_{\mathrm{Dol}}(C,d), \BC) \xrightarrow{\simeq} H^*(M_{\mathrm{Dol}}(C,d+n), \BC)
\]
induced by (\ref{eq14}) preserves the tautological classes
\[
c(\gamma, k)  \in H^*(M_{\mathrm{Dol}}(C,d), \BC) \mapsto c(\gamma, k)  \in H^*(M_{\mathrm{Dol}}(C,d+n), \BC).
\]
\end{prop}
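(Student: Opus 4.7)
The plan is to exploit the uniqueness of the normalized twisted Chern character recalled in Section~\ref{1.1}. Denote the isomorphism (\ref{eq14}) by $\phi: M_{\mathrm{Dol}}(n,d) \xrightarrow{\simeq} M_{\mathrm{Dol}}(n,d+n)$. Since each tautological class $c(\gamma,k)$ is the $\gamma$-integral of the canonical class $\mathrm{ch}^\alpha_k(\CU)$, and since pushforward along $p_M$ is compatible with $\phi^\ast$ via base change for the isomorphism $\mathrm{id}_C \times \phi$, it suffices to prove that the pullback of the normalized twisted Chern character on $C \times M_{\mathrm{Dol}}(n,d+n)$ along $\mathrm{id}_C \times \phi$ equals the normalized twisted Chern character on $C \times M_{\mathrm{Dol}}(n,d)$.

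To carry this out, I will fix universal families $\CU$ on $C \times M_{\mathrm{Dol}}(n,d)$ and $\CV$ on $C \times M_{\mathrm{Dol}}(n,d+n)$, with respective normalizing twists $\alpha$ and $\beta$. Because $\phi$ sends the isomorphism class of $(\CE,\theta)$ to that of $(\CE \otimes \CO_C(1), \theta)$, the family $\CU' := (\mathrm{id}_C \times \phi)^\ast \CV$ agrees with $\CU \otimes p_C^\ast \CO_C(1)$ up to a line bundle $p_M^\ast \CL$ pulled back from the base. The next observation is that tensoring a universal family by $p_C^\ast L_1 \otimes p_M^\ast L_2$ shifts the twist by $\alpha \mapsto \alpha - p_C^\ast c_1(L_1) - p_M^\ast c_1(L_2)$ while leaving the total twisted Chern character unchanged. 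Applied with $L_1 = \CO_C(1)$ and $L_2 = \CL$, this shows that the normalized twisted Chern character of $\CU'$, viewed as a family on $C \times M_{\mathrm{Dol}}(n,d)$, coincides with $\mathrm{ch}^\alpha(\CU)$.

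On the other hand, since $\mathrm{id}_C \times \phi$ is an isomorphism respecting the projections to both factors, pulling $\mathrm{ch}^\beta(\CV)$ back along it preserves the normalization conditions $\mathrm{ch}^\beta_1(\CV)|_{\{p\}\times M_{\mathrm{Dol}}(n,d+n)} = 0$ and $\mathrm{ch}^\beta_1(\CV)|_{C \times \{q\}} = 0$. Thus $(\mathrm{id}_C \times \phi)^\ast \mathrm{ch}^\beta(\CV)$ is a normalized twisted Chern character on $C \times M_{\mathrm{Dol}}(n,d)$, and by the uniqueness statement in Section~\ref{1.1} it must equal $\mathrm{ch}^\alpha(\CU)$. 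Integrating the degree-$k$ piece against $\gamma$ then yields $\phi^\ast c(\gamma,k) = c(\gamma,k)$, as desired.

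No step should be a serious obstacle; the argument is essentially formal once the uniqueness of the normalizing twist is in hand. The only mildly delicate item is the bookkeeping check that the corrected twist $\alpha - p_C^\ast c_1(\CO_C(1)) - p_M^\ast c_1(\CL)$ retains the required split form $p_C^\ast(\cdot) + p_M^\ast(\cdot)$, which is automatic because both correction terms are pulled back from a single factor.
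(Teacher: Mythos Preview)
Your argument is correct and follows essentially the same approach as the paper's proof: both observe that the universal families on the two sides differ only by a line bundle of the form $p_C^\ast L_1 \otimes p_M^\ast L_2$, so the normalized twisted Chern characters coincide under the isomorphism and hence so do the tautological classes. You have simply spelled out in detail what the paper states in two sentences.
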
 

\begin{proof}
Under the isomorphism (\ref{eq14}), a universal family for $M_{\mathrm{Dol}}(C,d+n)$ is obtained by taking the tensor product of a universal family for $M_{\mathrm{Dol}}(C,d)$ with the pullback of $\CO_C(1)$ from $C$. Hence the isomorphism 
\[
H^*(C \times M_{\mathrm{Dol}}(C,d), \BC) \xrightarrow{\simeq} H^*(C \times M_{\mathrm{Dol}}(C,d+n), \BC)
\]
induced by (\ref{eq14}) also preserves the normalized classes, and thus the tautological classes. \end{proof}

\section{Similitude groups}\label{Sec2}

\subsection{Good elements}\label{Good}
We fix two integers $d, d'$ coprime to $n$. We call an element $A \in \mathrm{GSp}(\Lambda_\BC)$ \emph{good} if there is an isomorphism of $\BC$-algebras
\[
G_A: H^*(M_{\mathrm{Dol}}(C,d), \BC) \xrightarrow{\simeq} H^*(M_{\mathrm{Dol}}(C,d'), \BC) 
\]
satisfying that:
\begin{enumerate}
    \item[(i)] $G_A(c(\gamma, k)) = c(A\gamma, k)$ for any $\gamma \in H^*(C, \BC)$ and $k\in \BN$; and
    \item[(ii)] $G_A\left( P_iH^j(M_{\mathrm{Dol}}(C,d), \BC) \right) = P_iH^j(M_{\mathrm{Dol}}(C,d'), \BC)$ for any $i,j \in \BN$.
\end{enumerate}

We denote by $\CG$ the set of all good elements. It is clear that Theorem \ref{main_thm} is equivalent to
\begin{equation}\label{thm2.1}
\CG = \mathrm{GSp}(\Lambda_\BC).
\end{equation}
We prove some basic properties for $\CG$ in Section \ref{Sec2.3}, and conclude the section with a criterion for (\ref{thm2.1}).


\subsection{Basic properties}\label{Sec2.3}

\begin{lem} \label{lemma1}
The set $\CG \subset \mathrm{GSp}(\Lambda_\BC)$ is closed under the left- or right-action of $\mathrm{Sp}(\Lambda)$.
\end{lem}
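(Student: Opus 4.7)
The plan is to exhibit $G_{AB}$ and $G_{BA}$ as compositions of $G_A$ with monodromy operators on the source and the target. For $B \in \mathrm{Sp}(V)$, denote by $M_B^{(d)}$ and $M_B^{(d')}$ the monodromy automorphisms of $H^*(M_{\mathrm{Dol}}(n,d), \BC)$ and $H^*(M_{\mathrm{Dol}}(n,d'), \BC)$ coming from Section \ref{0.2.1}. I will use three properties of these operators: (a) each is a $\BC$-algebra automorphism; (b) each preserves the perverse filtration, by the local constancy recalled in Section \ref{0.2.1}; and (c) each acts on tautological classes by $M_B^{(\bullet)}(c(\gamma, k)) = c(B\gamma, k)$, where $B$ acts on $H^*(C, \BC)$ as the identity on $H^0$ and $H^2$ and via the standard representation on $H^1 = V_\BC$. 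Since $\lambda_B = 1$ for $B \in \mathrm{Sp}(V)$, this matches the extension to $\mathrm{GSp}(V_\BC)$ introduced in Section \ref{s0.4}.

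Granting (a)--(c), for $A \in \CG$ I would set
\[
G_{AB} := G_A \circ M_B^{(d)}, \qquad G_{BA} := M_B^{(d')} \circ G_A.
\]
Each is a composition of $\BC$-algebra isomorphisms preserving the perverse filtration, so property (ii) of a good element is immediate. For property (i), a direct computation gives
\[
G_{AB}(c(\gamma, k)) = G_A(c(B\gamma, k)) = c(AB\gamma, k), \qquad G_{BA}(c(\gamma, k)) = M_B^{(d')}(c(A\gamma, k)) = c(BA\gamma, k),
\]
so $AB, BA \in \CG$, and the lemma follows.

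The only step that really needs work is (c), i.e., identifying the monodromy action on the generators $c(\gamma, k)$ of Section \ref{Sec0.3} with the geometric $\mathrm{Sp}(V)$-action on $H^*(C, \BC)$. I would argue this by naturality: over a base $S$ parametrizing smooth projective curves $\CC/S$, the Dolbeault moduli spaces fit together into a relative moduli $M_{\mathrm{Dol}}(n,d)/S$ carrying a relative twisted universal family, so the normalized Chern characters of Section \ref{1.1} extend to global cohomology classes on $\CC \times_S M_{\mathrm{Dol}}(n,d)$; pushing these forward against fiberwise cohomology classes of $\CC/S$ realizes the $c(\gamma, k)$ fiberwise, and the $\pi_1(S) \to \mathrm{Sp}(V)$ monodromy then acts on them through its standard action on $H^*(C, \BC)$. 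This is the main (though essentially standard) check; features (a) and (b) are already encoded in Section \ref{0.2.1}.
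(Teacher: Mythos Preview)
Your proposal is correct and follows essentially the same approach as the paper: compose $G_A$ on the left or right with the monodromy automorphisms, use that monodromy preserves the perverse filtration (\cite{dCM}), and that it acts on the tautological generators by $c(\gamma,k)\mapsto c(B\gamma,k)$ for $B\in\mathrm{Sp}(V)$. The paper's proof is terser---it asserts the action on $c(\gamma,k)$ without the naturality argument you sketch for (c)---but the logic is identical.
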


\begin{proof}
This is given by the monodromy symmetry. More precisely, for fixed $d$, a monodromy operator obtained by varying the curve $C$ is of the form
\[
H^*(M_{\mathrm{Dol}}(C,d), \BC) \xrightarrow{\simeq} H^*(M_{\mathrm{Dol}}(C,d), \BC), \quad c(\gamma, k) \mapsto c(M\gamma, k)
\]
with $M \in \mathrm{Sp}(V)$ (cf. the end of \S\ref{s0.4}), and it follows from \cite[Theorem 1.1.1]{dCM} that a monodromy operator preserves the perverse filtration. The lemma follows from composing $G_A$ with a monodromy operator on the left or right.
\end{proof}

\begin{lem}\label{lemma2}
The set $\CG \subset \mathrm{GSp}(\Lambda_\BC)$ is Zariski closed.
\end{lem}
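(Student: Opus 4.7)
The plan is to realize $\CG$ as the common vanishing locus of finitely many regular functions on $\mathrm{GSp}(V_\BC)$, by translating the two defining conditions (i) and (ii) of goodness into polynomial equations in the entries of $A$.

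First I would set up a common polynomial presentation. Using Markman's theorem together with the finite-dimensionality of the total cohomology, I would fix a basis $\{e_\alpha\}$ of $H^*(C, \BC)$ and an integer $K$ so that $\{c(e_\alpha, k)\}_{k \leq K}$ generates both $H^*(M_{\mathrm{Dol}}(n,d), \BC)$ and $H^*(M_{\mathrm{Dol}}(n,d'), \BC)$ as $\BC$-algebras. Let $R := \BC[x_{\alpha,k}]$ be the corresponding polynomial algebra, with surjections $\pi_d, \pi_{d'} \colon R \twoheadrightarrow H^*(M_{\mathrm{Dol}}(n,\bullet), \BC)$ and ideals $I_d, I_{d'}$. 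The extension of the action of $A$ to $H^*(C, \BC)$ described in Section \ref{s0.4} (scalar $\lambda_A$ on $H^0$, the matrix $A$ on $V_\BC$, and the identity on $H^2$) lifts linearly in $\gamma$ to an algebra automorphism $\sigma_A$ of $R$ whose structure constants in the basis $\{x_{\alpha,k}\}$ depend polynomially on the entries of $A$.

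Next I would encode condition (i). By the Hilbert basis theorem, pick finitely many generators $g_1, \ldots, g_m$ of $I_d$. An algebra homomorphism $G_A$ satisfying (i) exists if and only if $\pi_{d'}(\sigma_A(g_j)) = 0$ for all $j$, and expanding these elements in any fixed $\BC$-basis of $H^*(M_{\mathrm{Dol}}(n,d'), \BC)$ turns the condition into finitely many polynomial equations in the entries of $A$, cutting out a Zariski closed locus $Z_1 \subseteq \mathrm{GSp}(V_\BC)$. On $Z_1$ the resulting $G_A$ is automatically a $\BC$-algebra isomorphism, since it sends the generating set $\{c(e_\alpha, k)\}$ to $\{c(Ae_\alpha, k)\}$, which still generates because $A$ is invertible.

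For condition (ii), I would fix a basis $\{v_1, \ldots, v_N\}$ of $H^*(M_{\mathrm{Dol}}(n,d), \BC)$ compatible with both the cohomological grading and the perverse filtration, choose polynomial lifts $p_i \in R$ with $\pi_d(p_i) = v_i$, and similarly pick a perverse-filtration-compatible basis $\{w_\beta\}$ of $H^*(M_{\mathrm{Dol}}(n,d'), \BC)$. The expansion $\pi_{d'}(\sigma_A(p_i)) = \sum_\beta f_{i,\beta}(A)\, w_\beta$ has coefficients $f_{i,\beta}$ polynomial in $A$, and requiring $f_{i,\beta}(A) = 0$ whenever the perverse level of $w_\beta$ strictly exceeds that of $v_i$ in the same cohomological degree carves out a closed subset $Z_2$. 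On $Z_1$ the map $\pi_{d'}\circ \sigma_A$ descends through $\pi_d$, so $\pi_{d'}(\sigma_A(p_i)) = G_A(v_i)$ independently of the chosen lift, and therefore $\CG = Z_1 \cap Z_2$ is Zariski closed. The only delicacy is that outside $Z_1$ the map $\pi_{d'}\circ \sigma_A$ fails to descend, so the functions $f_{i,\beta}$ a priori depend on the choice of lifts $p_i$; this is why conditions (i) and (ii) must be imposed simultaneously rather than treated in isolation.
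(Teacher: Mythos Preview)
Your proof is correct and follows essentially the same approach as the paper's: both arguments reduce condition~(i) to the polynomial condition that relations among the tautological generators on $M_{\mathrm{Dol}}(n,d)$ map to relations on $M_{\mathrm{Dol}}(n,d')$, and condition~(ii) to the vanishing of certain coordinates of $G_A$ in a filtered basis. Your version is somewhat more formally packaged (the explicit polynomial ring $R$, the automorphism $\sigma_A$, the appeal to the Hilbert basis theorem, and the remark about the lift-dependence of the $f_{i,\beta}$ outside $Z_1$), but the underlying content is the same.
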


\begin{proof}
For convenience, we pick a $\BC$-basis $e_1, e_2, \dots, e_{2g}$ of $H^1(C, \BC)$. So we have the $\BC$-algebra generators 
\begin{equation}\label{eqn14}
c(1, k),~~ c(e_i, k),~~ c(\mathrm{pt}, k),~~  \quad i,k \in \BN
\end{equation}
of $H^*(M_{\mathrm{Dol}}(C,d), \BC)$ and $H^*(M_{\mathrm{Dol}}(C,d'), \BC)$. In order to prove that $\CG$ is Zariski closed in $\mathrm{GSp}(\Lambda_\BC)$, it suffices to show that (i) and (ii) are Zariski closed conditions for $A \in \mathrm{GSp}(\Lambda_\BC)$.

We first treat (i). Any $c(A\gamma, k)$ can be expressed in terms of the classes (\ref{eqn14}) with coefficients given by certain entries of the matrix $A$. Hence the condition that $G_A$ is a $\BC$-algebra isomorphism sending $c(\gamma,k)$ to $c(A\gamma,k)$ is equivalent to that any relation between $\{c(\gamma, k)\}$ on $M_{\mathrm{Dol}}(C,d)$ are sent to a relation between $\{c(A\gamma, k)\}$ on $M_{\mathrm{Dol}}(C,d')$, which is clearly a Zariski closed condition on the entries of $A$.

For (ii), we consider a filtered basis of each cohomology group with respect to the perverse filtration; that is, we require the basis to satisfy that each piece $P_j$ of the perverse filtration is spanned by a subset of the basis. The condition (ii) can be expressed completely in terms of the filtered basis, namely, the image of every vector of the basis which lies in $P_j$ is sent to a vector which is only a linear combination of the sub-basis giving a basis of $P_j$. In other words, the condition (ii) is equivalent to the vanishing of certain coefficients for the linear transformation $G_A$ under the filtered basis. By Markman \cite{Markman} every vector of the filtered basis is expressed in terms of a linear combination of products of the classes (\ref{eqn14}). Hence coefficients for the linear transformation $G_A$ under the filtered basis are polynomials in entries of the matrix $A$, whose vanishing is also a Zariski closed condition.
\end{proof}

Combining the two lemmas above, we get the following criterion for $\CG = \mathrm{GSp}(\Lambda_\BC)$.

\begin{prop}\label{prop2.4}
If the set
\[
\big{\{}\lambda_A| ~~ A \in \CG \subset \mathrm{GSp}(\Lambda_\BC)\big{\}} \subset \BG_m
\]
is infinite, then $\CG = \mathrm{GSp}(\Lambda_\BC)$.
\end{prop}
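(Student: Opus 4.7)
The plan is to package Lemmas \ref{lemma1} and \ref{lemma2} through the similitude character
\[
\lambda \colon \mathrm{GSp}(V_\BC) \twoheadrightarrow \BG_m, \qquad A \mapsto \lambda_A,
\]
which is a surjective morphism of algebraic groups with kernel $\mathrm{Sp}(V_\BC)$; in particular $\lambda$ realizes $\BG_m$ as the smooth (flat) quotient $\mathrm{GSp}(V_\BC)/\mathrm{Sp}(V_\BC)$, and is therefore an open map. My goal will be to show that $\CG$ is the full preimage under $\lambda$ of a Zariski closed subset of $\BG_m$, and then use the hypothesis to identify that subset with $\BG_m$.

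The first step will be to upgrade Lemma \ref{lemma1} from the discrete group $\mathrm{Sp}(V) = \mathrm{Sp}_{2g}(\BZ)$ to the complex symplectic group $\mathrm{Sp}(V_\BC)$. Since $\mathrm{Sp}_{2g}(\BZ)$ is a lattice in the connected semisimple Lie group $\mathrm{Sp}_{2g}(\BR)$, the Borel density theorem gives that $\mathrm{Sp}(V)$ is Zariski dense in $\mathrm{Sp}(V_\BC)$. Combined with Lemma \ref{lemma2}, the fact that $\CG$ is closed under left and right multiplication by $\mathrm{Sp}(V)$ forces $\CG$ to be stable under left and right multiplication by all of $\mathrm{Sp}(V_\BC)$.

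Next I would deduce that $\CG$ is $\lambda$-saturated: for every $A \in \CG$, the entire $\lambda$-fiber $\lambda^{-1}(\lambda_A) = A\cdot \mathrm{Sp}(V_\BC)$ is contained in $\CG$. Setting $S := \lambda(\CG) \subset \BG_m$, this yields $\CG = \lambda^{-1}(S)$. Since $\CG$ is Zariski closed and $\lambda$ is open, the complement $\BG_m \setminus S = \lambda(\mathrm{GSp}(V_\BC) \setminus \CG)$ is Zariski open, so $S$ itself is Zariski closed in $\BG_m$.

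Finally, by the hypothesis $S$ is infinite; since $\BG_m$ is a one-dimensional irreducible variety whose proper Zariski closed subsets are all finite, $S$ must equal $\BG_m$, and hence $\CG = \lambda^{-1}(\BG_m) = \mathrm{GSp}(V_\BC)$. The only non-formal ingredient in this plan is the Zariski density of $\mathrm{Sp}_{2g}(\BZ)$ in $\mathrm{Sp}_{2g}(\BC)$, which I expect to be the main (though essentially standard) input; the rest is a clean manipulation of the quotient $\mathrm{GSp}/\mathrm{Sp} \cong \BG_m$.
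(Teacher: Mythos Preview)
Your proposal is correct and follows essentially the same approach as the paper: both use Borel density to pass from $\mathrm{Sp}(V)$-invariance to $\mathrm{Sp}(V_\BC)$-invariance of the closed set $\CG$, then analyze the image under the similitude character $\lambda:\mathrm{GSp}(V_\BC)\to \mathrm{GSp}(V_\BC)/\mathrm{Sp}(V_\BC)\cong \BG_m$ and conclude from the fact that an infinite subset of $\BG_m$ is Zariski dense. Your write-up is in fact a bit more explicit than the paper's in justifying why the image $S=\lambda(\CG)$ is closed (via openness of $\lambda$), but the underlying argument is the same.
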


\begin{proof}
The Borel density theorem entails that $\mathrm{Sp}(\Lambda) \subset \mathrm{Sp}(\Lambda_\BC)$ is a Zariski dense subset. We then deduce from Lemmas \ref{lemma1} and \ref{lemma2} that $\CG$ is preserved by the right action of $\mathrm{Sp}(\Lambda_\BC)$ on $\mathrm{GSp}(\Lambda_\BC)$. Hence to prove $\CG = \mathrm{GSp}(\Lambda_\BC)$, it suffices to prove that the image of $\CG$ via the projection
\[
\mathrm{GSp}(\Lambda_\BC) \to \mathrm{GSp}(\Lambda_\BC)/\mathrm{Sp}(\Lambda_\BC)
\]
is dense, where the quotient is with respect to the right action. This is exactly the assumption of the proposition, since the similitude character $\lambda_A$ defines an isomorphism 
\[
\mathrm{GSp}(\Lambda_\BC)/\mathrm{Sp}(\Lambda_\BC) = \BG_m.   \qedhere
\]
\end{proof}

Next, we use techniques from positive characteristic to construct sufficiently many elements in $\CG$.

\section{Reduction to positive characteristic}\label{Section3}


In this section, we will always use $\overline{\BQ}_{\ell}$-adic cohomology with  the prime $\ell$ coprime to $p$. In order to compare $\overline{\BQ}_{\ell}$-adic cohomology with singular cohomology with $\BC$-coefficients, we fix an isomorphism $\overline{\mathbb{Q}}_l \xrightarrow{\simeq} \BC$.


\subsection{Non-abelian Hodge in positive characteristic}
Our main tool to construct elements in $\CG$ is to use the non-abelian Hodge correspondence in positive characteristic \cite{Groch, Chen-Zhu, dCZ}.

Let $C_p$ be a curve over an algebraically closed field $\mathbf{k}$ of positive characteristic $p >0$. 
Throughout the rest of the paper, we assume that $p$ 
is large enough so that it is 
is coprime to the rank $n$. We denote by $C^{(1)}_p$ the Frobenius twist of the curve $C_p$ obtained from the base change of 
\[
\mathrm{Frob}: \mathbf{k} \rightarrow \mathbf{k}, \quad \quad x \mapsto x^p,
\]
and we denote by 
\[
\mathrm{Fr}_p: C_p \to C_p^{(1)}
\]
the relative Frobenius $\mathbf{k}$-morphism, which is finite of degree $p$. 


Identical to the case of $\BC$, the moduli space $M_{\mathrm{Dol}}(C^{(1)}_p, d)$ of (slope)-stable Higgs bundle of rank $n$ and degree $d$ over the curve $C^{(1)}_p$ carries a Hitchin fibration $h_p:M_{\mathrm{Dol}}(C^{(1)}_p, d) \to A(C_p^{(1)})$. Compared with the characteristic 0 case, a new feature in characteristic $p$ is the existence of a Hitchin type fibration, which is called the \emph{Hitchin-de Rham} morphism, from the moduli space $M_{\mathrm{dR}}(C_p, dp)$ of (slope-)stable flat connections
\begin{equation}\label{hitdr}
\nabla: \CE \to \CE \otimes \Omega_{C_p}, \quad \mathrm{rank}(\CE) = n,~~~ \mathrm{deg}(\CE) = dp
\end{equation}
to the Hitchin base $A(C_p^{(1)})$ associated with the Frobenius twist $C_p^{(1)}$. More concretely, the \emph{$p$-curvature} of a flat bundle gives rise to a morphism $\CE \to \CE \otimes \mathrm{Fr}_p^* \Omega_{C^{(1)}_p}$, whose characteristic polynomial induces the Hitchin-de Rham morphism
\[
h^{(1)}_p : M_{\mathrm{dR}}(C_p,dp) \rightarrow A(C_p^{(1)});
\]
we refer to \cite{LP} and \cite[Section 3]{Groch} for more details for the $p$-curvature and the Hitchin-de Rham fibration. Consequently, the cohomology of both moduli spaces $M_{\mathrm{Dol}}(C_p^{(1)}, d)$ and $M_{\mathrm{dR}}(C_p, dp)$ admit perverse filtrations induced by $h_p$ and $h_p^{(1)},$ respectively.

Groechenig's version of the  non-abelian Hodge theorem in characteristic $p$ \cite[Theorem 1.1]{Groch} asserts that the two morphisms  (Hitchin for $C_p^{(1)},$ and Hitchin-de Rham for $C_p$)
\begin{equation}\label{etale}
h_p:  M_{\mathrm{Dol}}(C^{(1)}_p, d) \to A(C_p^{(1)}), \quad h^{(1)}_p : M_{\mathrm{dR}}(C_p, dp) \rightarrow A(C_p^{(1)})
\end{equation}
are both proper and surjective, and are \'etale-locally equivalent over the base $A(C_p^{(1)})$.
We remark that Chen-Zhu \cite{Chen-Zhu}  proved a similar result for stacks (without semistability assertions)
for arbitrary reductive groups.

The Hitchin-de Rham fibration  (\ref{etale})  for $C_p$ in degree $dp$
\begin{equation}\label{3.2(1)}
h^{(1)}_p : M_{\mathrm{dR}}(C_p,dp) \rightarrow A(C_p^{(1)})
\end{equation}
is closely related to the Hitchin fibration for the Dolbeault moduli space for $C_p$ in  the same degree $dp$:
\begin{equation}\label{3.2(2)}
h_p: M_{\mathrm{Dol}}(C_p,dp) \rightarrow A(C_p).
\end{equation}
via the Hodge moduli space
\[
\tau: M_{\mathrm{Hod}}(C_p,dp) \rightarrow A(C_p^{(1)})\times \BA_t^1 \rightarrow \BA_t^1
\]
parameterizing $t$-connections; this was constructed in \cite{LP} in the degree 0 case and was extended to the case of $(n, dp)=1$ by \cite[Proposition 3.1]{dCZ}. More precisely, it was shown in \cite{dCZ} that $\tau$ is a smooth family over $\BA_t^1$; the fiber of $\tau$ over $t= 0 \in \BA_t^1$  recovers (\ref{3.2(1)});
 the fiber over $t= 1$ recovers (\ref{3.2(2)}), post-composed with the natural universal homeomorphism between the Hitchin bases $A(C^{(1)}_p) \xrightarrow{\simeq} A(C_p)$; this latter can be identified with the relative Frobenius for the Hitchin base $A(C_p)$ \cite[Lemma 4.3]{dCZ0}. 
 

\subsection{Cohomological correspondences}

Note that the correspondence (\ref{etale}) is local in nature.
Under the coprimality assumption  $(n,dp)=1$ for rank and degree,  de Cataldo and Zhang \cite{dCZ} established a series  of \emph{global} cohomological correspondences that we now describe.

The global nilpotent cones associated with the Dolbeault and the de Rham moduli spaces
\begin{equation}\label{nilp}
N_{\mathrm{Dol}}(C_p^{(1)},d): = h_p^{-1}(0), \quad \quad N_{\mathrm{dR}}(C_p,dp): = {h^{(1)}_p}^{-1}(0)
\end{equation}
are  isomorphic by the \'etale equivalence of (\ref{etale}). In fact, there is a distinguished isomorphism 
given by (\ref{disti}) that we will describe later. The cohomology rings of the global nilpotent cones (\ref{nilp}) carry  natural filtrations induced by the respective perverse filtrations
and the decomposition theorem; see \cite[Remark 2.3]{dCZ}.

We have the following  canonical commutative diagram of canonical ring isomorphisms (coefficients $\overline{\BQ}_{\ell}$ throughout),
which are also filtered isomorphisms for the respective perverse filtrations
\begin{equation}\label{cocorr}
\begin{tikzcd}
  H^* (M_{\mathrm Dol} (C_p, dp)) 
 &  H^* (M_{\mathrm dR} (C_p, dp))    \arrow[l,  "\mathrm{(d)}", "\simeq"'] 
    & H^*(N_{\mathrm dR} (C_p, dp))   \arrow[l,  "(\mathrm{c)}","\simeq"'] 
    \\
  H^*(M_{\rm Dol} (C_p^{(1)}, d))  \arrow[rr, "\simeq",  "\mathrm{(a)}"',] \arrow[u,  dotted, "\simeq",  "{\Theta}_M"',] &  & H^*(N_{\mathrm Dol} (C_p^{(1)}, d))  \arrow[u,  "\mathrm{(b)}","\simeq"'],
\end{tikzcd}
\end{equation}
where: the perverse filtrations on the cohomology groups  of
$N_{\mathrm{Dol}}$ and of $N_{\mathrm{dR}}$ are defined in \cite[\S2.2]{dCZ}; 
(a) is the filtered isomorphism in \cite[(33) 
combined with Remark 2.3]{dCZ}, and it is induced by restriciton;
(b) is the second filtered isomorphism in \cite[(36)]{dCZ} and it 
is  induced (this also uses results of M. Groechenig, as indicated in \textit{loc.\;cit.}) by the local non-abelian Hodge correspondence over $0 \in A(C_p)$ (see (\ref{disti})); 
(c) is the filtered isomorphism in  \cite[(31)]{dCZ}, and it is induced by restriciton;
(d) is the filtered isomorphism in  \cite[(25)]{dCZ}. See also the companion diagram (\ref{cocorr-b}).
The composition is the filtered ring isomorphism
\begin{equation}\label{eqn20}
{\Theta}_M:  H^*(M_{\mathrm{Dol}}(C_p^{(1)},d), \overline{\BQ}_l) \xrightarrow{\simeq} H^*(M_{\mathrm{Dol}}(C_p, dp), \overline{\BQ}_l).
\end{equation}

\subsection{Lift to characteristic zero via finite fields}\label{Sec3.3}

Given a curve $C$ over the complex numbers, for each prime $p$ with $p>n$ and $p \neq \ell$,
we can use the canonical isomorphism ${\Theta}_M$ to produce a \emph{non-canonical} ring isomorphism 
\begin{equation}\label{eqn21}
\begin{tikzcd}
 \widetilde{\Theta}_M:  H^* (M_{\mathrm Dol} (C, d), \BC)  \ar[r, "\simeq"]   &
  H^*(M_{\rm Dol} (C, dp),\BC), 
\end{tikzcd}
\end{equation}
filtered with respect to the respective perverse filtrations.

The construction proceeds as follows.  First, notice that, for a given $p$, it suffices to construct $\widetilde{\Theta}_M$
for a specific curve $C$ and then apply a monodromy argument, as in \cite{dCM}, or \cite[Proposition 3.3.(2)]{dCZ}, to extend to arbitrary curves.  Note that this introduces an ambiguity governed by the monodromy action
of $\mathrm{Sp}(V)$

We will choose $C$ to be the lift of a curve defined over $\BF_p$.
Let $C_p$ be a smooth curve defined over the finite field $\BF_p$; here for notational convenience, we denote the base change of  $C_p$ to an algebraic closure
$\mathbf{k}$ 
of $\BF_p$ by the same symbol $C_p.$  
By our choice, we have that $C_p = C_p^{(1)}.$
We can lift $C_p$ to a morphism $\CC \to \mathrm{Spec}(R)$ with $R$ a complete strictly Henselian DVR of characteristic 0 with residue field $\mathbf{k}$, where the geometric generic fiber $C$ is a connected nonsingular curve of genus $g\geq 2$.

 Specialization morphisms associated with the smooth family $\CC \to \mathrm{Spec}(R)$ induce canonical isomorphisms 
\begin{equation}\label{eqnn23}
H^*(C, \overline{\BQ}_{\ell}) = H^*(C_p, \overline{\BQ}_{\ell}), \quad H^*(M_{\mathrm{Dol}}(C,d), \overline{\BQ}_{\ell}) = H^*(M_{\mathrm{Dol}}(C_p, d), \overline{\BQ}_{\ell}),
\end{equation}
where we apply \cite[Proposition 3.3.(2)]{dCZ} for the second isomorphism. If we combine these with the canonical isomorphism
$\Theta_M$ (\ref{cocorr})
and tensoring with
the fixed isomorphism $\overline{\BQ}_{\ell} \xrightarrow{\simeq} \BC$,
we obtain the desired $\widetilde{\Theta}_M$.

\begin{thm}\label{thm3.1}
The ring isomorphism $\widetilde{\Theta}_M$ satisfies
\[
\widetilde{\Theta}_M \big{(}c(\gamma, k)\big{)} = c\big{(}p^{-1} \mathrm{Fr}_p^*\gamma,k\big{)}, \quad \quad \forall \gamma \in H^*(C, \BC) ,~~~\forall k \in \BN.
\]
Here, $\mathrm{Fr}_p^*: H^*(C_p, \BC) \stackrel{\sim}\to H^*(C_p, \BC)$ is the Frobenius pullback associated with the relative Frobenius $\mathbf{k}$-morphism $\mathrm{Fr}_p: C_p \to C_p^{(1)}=C_p,$ and we have used the fixed isomorphism $\overline{\BQ}_{\ell} \xrightarrow{\simeq} \BC$.
\end{thm}

\begin{rmk}\label{rmk3.1}
If we view the operator $A_p:= p^{-1}\mathrm{Fr}^*_p: H^*(C, \BC) \to H^*(C, \BC)$ as an element in $\mathrm{GSp}(\Lambda_\BC)$, then by the peculiar action defined in \S\ref{s0.4}, it acts as $\mathrm{id}$ on $H^2(C,\BC)$ and as multiplication by $\lambda_{A_p} = p^{-1}$ on $H^0(C,\BC),$ thus justifying introducing this action.\end{rmk}



\section{Proof of Theorem \ref{thm3.1}} \label{Sec4}


\subsection{Isomorphisms}
In the setting of Section \ref{Sec3.3}, we consider the relative  Dolbeault moduli space for degree $d$
\begin{equation}\label{eqnn25}
M_{\mathrm{Dol}}(\CC/R, d) \to \mathrm{Spec}(R)
\end{equation}
that is smooth over $R$, and the Hodge moduli space that is smooth over $\BA_t^1$
(\cite[Proposition 3.1]{dCZ}):
\begin{equation}\label{eqnn26}
    \tau: M_{\mathrm{Hod}}(C,dp) \rightarrow \BA_t^1.
\end{equation}
The morphism $\widetilde{\Theta}_M$ for Theorem \ref{thm3.1} is obtained as a composition, by inserting
the Dolbeault moduli spaces for $C$ into (\ref{cocorr}), by using (\ref{eqnn23}), and by keeping in mind that $C_p=C_p^{(1)}$, 
\begin{equation}\label{cocorr-b}
\begin{tikzcd}
  H^* (M_{\mathrm{Dol}} (C, dp))  & H^* (M_{\mathrm{Dol}} (C_p, dp))   \arrow[l, "\simeq"', "(\ref{eqnn23})~ \mathrm{u_2}"] 
 &  H^* (M_{\mathrm{dR}} (C_p, dp))    \arrow[l,  "\mathrm{(d)}", "\simeq"'] 
    & H^*(N_{\mathrm{d}R} (C_p, dp))   \arrow[l,  "\mathrm{(c)}","\simeq"'] 
    \\
  H^* (M_{\mathrm{Dol}} (C, d)) \arrow[r, "\simeq", "(\ref{eqnn23})~ \mathrm{u_1}"']  \arrow[u,  dotted, "\simeq",  "\widetilde{{\Theta}}_M"',] & H^*(M_{\rm Dol} (C_p,d))  \arrow[rr, "\simeq",  "\mathrm{(a)}"',] \arrow[u, "\simeq",  "{\Theta}_M"',] &  & H^*(N_{\mathrm{Dol}} (C_p, d))  \arrow[u,  "\mathrm{(b)}","\simeq"'].
\end{tikzcd}
\end{equation}
Here: 
$\mathrm{u_1}$ and $\mathrm{u_2}$  are the 
specialization maps (\ref{eqnn23}) associated with (\ref{eqnn25}), for degrees $d$ and $dp$ respectively; $\mathrm{(a)}$ and $\mathrm{(c)}$ are the natural restriction maps for the global nilpotent cones
(\ref{nilp}); $\mathrm{(d)}$  is the specialization map associated with (\ref{eqnn26}); $\mathrm{(b)}$ is 
 induced by the local non-abelian Hodge correspondence over $0 \in A(C_p)$ (see (\ref{disti})); $\widetilde{\Theta}_M$
 is the composition.

For our purpose, we calculate each map with respect to the tautological classes. 

\begin{rmk}\label{rmk4.1}
We note that by \cite[Lemma 3.1 and the proof of Corollary 3.2]{Heinloth}, the fact that the rank and the degree are coprime ensures that each of the Dolbeault, de Rham, and Hodge moduli spaces above carries a \emph{universal family}. In particular, we may define the tautologial classes $c(\gamma,k)$ for each cohomology in the chain above via a universal family, compatibly with restriction maps.
\end{rmk}

\subsection{Restrictions and specializations} We first note that restrictions $\mathrm{(c,a)}$ and specialization maps 
$\mathrm{(u_1,d,u_2)}$ preserve the tautological classes, \emph{i.e.}, they send $c(\gamma,k)$ to $c(\gamma, k)$.

This statement is clear for restriction maps, since the restriction of a universal family on $M_{\mathrm{Dol}}(C_p,d)$ (resp. $M_{\mathrm{dR}}(C_p,d)$) to the corresponding global nilpotent cone $N_{\mathrm{Dol}}(C_p,d)$ (resp. $N_{\mathrm{dR}}(C_p,d)$) is still a universal family.

For the specialization maps $\mathrm{u_1}, \mathrm{u_2}$ and $\mathrm{(d)}$, this follows from the existence of universal families over $\mathrm{Spec}(R)$ and $\BA^1_t$ respectively; see Remark \ref{rmk4.1}.


\subsection{Global nilpotent cones}

Finally, we treat the morphism $\mathrm{(b)}$ in (\ref{cocorr-b}). 
Our goal is to prove the identities (\ref{zz2}) and (\ref{zz4}), to be used  in \S\ref{pf3.1} when
 proving  Theorem \ref{thm3.1}.
 
In order to carry out the computation, we need the precise description of the non-abelian Hodge correspondence \cite[Corollary 3.28, Lemma 3.46]{Groch} for the global nilpotent cones, which we review briefly as follows.

Let $D_{C_p}$ be the sheaf of crystalline differential operators on the curve $C_p= C_p^{(1)}.$
The pushforward of $D_{C_p}$ along the Frobenius map $\mathrm{Fr}_p: C_p \to C_p$ satisfies 
\[
\mathrm{Fr}_{p*}D_{C_p} = \pi_* \CD,
\]
where $\pi$ is the projection $T^*C_p \to C_p$ and $\CD$ is a uniquely determined $\CO_{T^*C_p}$-algebra; see \cite[Lemma 2.8]{Groch}. Moreover, by \cite[Theorem 3.20]{Groch}, the restriction of $\CD$ to any spectral curve $C_\alpha \subset T^*C_p$  splits, \emph{i.e.} we have an isomorphism 
\begin{equation}\label{splitting0}
\CD|_{C_\alpha} = \mathcal{E}nd_{\CO_{C_\alpha}}(V_\alpha)
\end{equation}
with $V_\alpha$ a rank $p$ vector bundle  on $C_\alpha$, well-defined up to tensoring with a line bundle.

We denote by $C_n \subset T^*C_p$ the spectral curve associated with $0 \in A(C_p)$;  it is the $n$-th thickening of the 0-section $C_p \subset T^*C_p$. By \cite[Proof of Corollary 3.45]{Groch}, there is a canonical choice of a vector bundle $V_n$ of rank $p$ that splits $\CD|_{C_n}$ as in (\ref{splitting0}). Such a choice induces a canonical isomorphism $\nu$ between the global nilpotent cones $N_{\mathrm{Dol}}(C_p, d)$ and $N_{\mathrm{dR}}(C_p, dp)$. For our purposes, we need to describe  the interaction of universal families under this isomorphism.

By the classical BNR correspondence, a Higgs bundle in $N_{\mathrm{Dol}}(C_p, d)$ is given as $(\pi_*\CF, \theta)$ where $\CF$ is a stable pure 1-dimensional sheaf supported on $C_n,$ and  $\pi: C_n \to C_p$ is the projection. It corresponds to a flat connection $(\CE, \nabla)\in N_{\mathrm{dR}}(C_p, dp)$ satisfying 
\begin{equation}\label{compare1}
\mathrm{Fr}_{p*}\CE \simeq \pi_* (\CF \otimes V_n).
\end{equation}

To globalize (\ref{compare1}) over the moduli spaces,
we denote by $\nu$ the non-abelian Hodge correspondence for the global nilpotent cones induced by $V_n$:
\begin{equation}\label{disti}
\nu: N_{\mathrm{dR}}(C_p, dp) \xrightarrow{\simeq} N_{\mathrm{Dol}}(C_p, d), \quad (\CE, \nabla) \mapsto (\pi_*\CF, \theta)
\end{equation}
where $\CE$ and $\CF$ satisfy (\ref{compare1}).
The isomorphism (b) in (\ref{cocorr-b}) is $\nu^*$ with inverse $\nu_*$.
We consider the degree $p$ finite map
\begin{equation}\label{eqnF}
F:= \mathrm{Fr}_p \times \nu: C_p \times  N_{\mathrm{dR}}(C_p, dp) \to C_p \times N_{\mathrm{Dol}}(C_p, d),
\end{equation}
and the projection 
\[
\widetilde{\pi}: C_n \times  N_{\mathrm{Dol}}(C_p, d) \to C_p \times N_{\mathrm{Dol}}(C_p, d).
\]
We also have the following natural projections:
\[
\begin{split}
p_{\mathrm{dR}}: C_p \times  N_{\mathrm{dR}}(C_p, dp) \to C_p,& \quad q_{\mathrm{dR}}: C_p \times  N_{\mathrm{dR}}(C_p, dp) \to N_{\mathrm{dR}}(C_p, dp),\\
p_{\mathrm{Dol}}: C_p \times  N_{\mathrm{Dol}}(C_p, d) \to C_p,& \quad q_{\mathrm{Dol}}: C_p \times  N_{\mathrm{Dol}}(C_p, d) \to N_{\mathrm{Dol}}(C_p, d),\\
r: C_n \times  N_{\mathrm{Dol}}(C_p, dp) \to C_n.
\end{split}
\]

 Let $\CV$ be the pullback $r^* V_n$ of $V_n$ along the projection $r$; it is a vector bundle of rank $p$. Let $(\CU_{\mathrm{dR}} ,\nabla)$ and
\begin{equation}
    \label{dolpifn}
    (\CU_{\mathrm{Dol}}, \theta) = (\widetilde{\pi}_* \CF_n, \theta)
\end{equation}
be universal families over the schemes on both sides of (\ref{eqnF}), where $\CF_n$ is a universal family over $C_n \times N_{\mathrm{Dol}}(C_p, d)$. The isomorphism (\ref{compare1}) shows that the pushforward $F_*\CU_{\mathrm{dR}}$ and $\widetilde{\pi}_*(\CF_n \otimes \CV)$ coincide after restricting over any closed point on $N_{\mathrm{Dol}}(C_p, d)$. Therefore $F_*\CU_{\mathrm{dR}}$ is the tensor product of $\widetilde{\pi}_*(\CF_n \otimes \CV)$ and a line bundle pulled back from $N_{\mathrm{Dol}}(C_p,d)$. Modifying the universal family $\CF_n$ by this line bundle, we may assume that the universal families $\CU_{\mathrm{dR}}$ and $\CF_n$ satisfy 
\begin{equation}\label{key_equation}
    F_* \CU_{\mathrm{dR}} \simeq \widetilde{\pi}_*(\CF_n \otimes \CV).
\end{equation}

Recall the following type of classes (cf. (\ref{form0})), which play a role in producing normalized and tautological classes.
Let $\alpha_0 \in H^2(C \times N_{\mathrm{Dol}}(C_p,d), \overline{\BQ}_{\ell})$ be a class of the form
\begin{equation}\label{form}
\alpha_0 =  p_{\mathrm{Dol}}^* \alpha_0'+ q_{\mathrm{Dol}}^* \alpha_0''.
\end{equation}

We need the following two classes of the form (\ref{form}) (for the first, Dol is replaced by dR) 
\begin{equation}\label{right form}
 \alpha_1:=\frac{p-1}{2}p_{\mathrm{dR}}^*c_1(\omega_{C_p}), \quad
 \beta:=\frac{p-1}{2p}p_{\mathrm{Dol}}^*c_1(\omega_{C_p}).
 \end{equation}

By explicit calculation, we have the identity   $\mathrm{td}(T_F)= 1+\alpha_1$ for 
the Todd class $\mathrm{td}(T_F)$ of the virtual tangent bundle $T_F$ (cf. \cite[B.7.6]{Fulton}).
The class $\beta$ will appear naturally shortly.

\begin{lem}\label{lemma z} We have the following identities
\begin{equation}\label{zz1}
F_*[ \mathrm{ch}^{\alpha_1} (\CU_{\mathrm{dR}})]
=
\mathrm{ch}  (F_* \CU_{\mathrm{dR}}) = 
p\cdot \mathrm{ch}^\beta (\CU_{\mathrm{Dol}}).
\end{equation}
\end{lem}
\begin{proof}
The first identity follows
by applying Grothendieck--Riemann--Roch  (GRR) (cf. \cite[Ex. 18.3.10]{Fulton}) to  the l.c.i. morphism $F$ (\ref{eqnF})
    \begin{equation}
    \label{chfudr}
         \mathrm{ch}(F_* \CU_{\mathrm{dR}}) 
         =F_* \left[  \mathrm{ch}(\CU_{\mathrm{dR}}) \mathrm{td}(T_F) \right]
         = F_* \left[  \mathrm{ch}^{\alpha_1}(\CU_{\mathrm{dR}})\right].
    \end{equation}

To argue for the second identity, we need three useful facts: 
Firstly,
since the inclusion $i: C_p\hookrightarrow C_n$ is a section to the projection $\pi: C_n\to C_p$, we have that 
the isomorphism $i^*: H^2(C_n)\to H^2(C_p)$ is the inverse to
the isomorphism $\pi^*: H^2(C_p)\to H^2(C_n)$.
Secondly,
the splitting $V_n$ on $C_n$ that is chosen in \cite[Proof of Corollary 3.45]{Groch} satisfies the identity of vector bundles $i^*V_n=\mathrm{Fr}_{p*}\mathcal{O}_{C_p}$. 
Thirdly, we have $c_1(\mathrm{Fr}_{p*}\mathcal{O}_{C_p})=\frac{p-1}{2}c_1(\omega_{C_p})$, thus we have that $p\beta=p_{\mathrm{Dol}}^*c_1(\mathrm{Fr}_{p*}\mathcal{O}_{C_p})$.
With these three facts, we obtain the following equalities:
\begin{equation}
\begin{split}
    \mathrm{ch}(F_*\mathcal{U}_{\mathrm{dR}}) & =\mathrm{ch}(\widetilde{\pi}_*(\mathcal{F}_n\otimes \mathcal{V}))=\widetilde{\pi}_*[\mathrm{ch}(\mathcal{F}_n)\mathrm{td}(T_{\widetilde{\pi}})\mathrm{ch}(\mathcal{V})]\\
    & =\widetilde{\pi}_*[\mathrm{ch}(\mathcal{F}_n)\mathrm{td}(T_{\widetilde{\pi}}) r^*\pi^*(p+i^*c_1(V_n))]\\
    &= \widetilde{\pi}_*[\mathrm{ch}(\mathcal{F}_n)\mathrm{td}(T_{\widetilde{\pi}}) \widetilde{\pi}^*p_{\mathrm{Dol}}^*(p+c_1(\mathrm{Fr}_{p*}\mathcal{O}_{C_p}))]\\
    &= \widetilde{\pi}_*[\mathrm{ch}(\mathcal{F}_n)\mathrm{td}(T_{\widetilde{\pi}})](p+p\beta)\\
    &= p\cdot \mathrm{ch}(\widetilde{\pi}_* \mathcal{F}_n)(1+\beta)= p\cdot \mathrm{ch}^{\beta}(\mathcal{U}_{\mathrm{Dol}}),
\end{split}
\end{equation}
where the first equality follows from (\ref{key_equation}); the second follows from GRR; the third follows from the definition $\mathcal{V}:=r^* V_n$ and the first useful fact above; the fourth follows from the identity $\pi\circ r=p_{\mathrm{Dol}}\circ \widetilde{\pi}: C_n\times N_{\mathrm{Dol}} \to C_p$ and second useful fact above; the fifth follows from the projection formula and the third useful fact; the sixth follows from GRR; and the last follows from  (\ref{dolpifn}). \qedhere
\end{proof}

By twisting (\ref{zz1}) with a class $\alpha_0$ of the form (\ref{form}), and by using the projection formula,  we get the identity:
 \begin{equation}\label{zz2}
F_*[ \mathrm{ch}^{F^* \alpha_0 +\alpha_1} (\CU_{\mathrm{dR}})]
=
\mathrm{ch}^{\alpha_0}  (F_* \CU_{\mathrm{dR}}) = 
p\cdot \mathrm{ch}^{\alpha_0+\beta} (\CU_{\mathrm{Dol}}).
\end{equation}

By integrating the lhs of (\ref{zz2})  over a class $\gamma \in H^*(C_p, \overline{\BQ}_{\ell}),$
the projection formula, together with the identities $q_{\mathrm{Dol}}\circ F = \nu \circ q_{\mathrm{dR}} $ and $\mathrm{Fr}_p \circ p_{\mathrm{dR}} = p_{\mathrm{Dol}}\circ F,$ yields the identities
    \[
    \begin{split}
         \int_{\gamma} \mathrm{ch}^{\alpha_0}(F_* \CU_{\mathrm{dR}})  & = q_{\mathrm{Dol}*}\left( p_{\mathrm{Dol}}^*\gamma  \cup   F_* \left[  \mathrm{ch}^{F^*\alpha_0+\alpha_1}(\CU_{\mathrm{dR}})\right] \right)\\
        & = q_{\mathrm{Dol}*} F_* \left( F^* p_{\mathrm{Dol}}^*\gamma  \cup   \mathrm{ch}^{F^*\alpha_0+\alpha_1}(\CU_{\mathrm{dR}}) \right) \\
        & =  \nu_* q_{\mathrm{dR}*} \left( p_{\mathrm{dR}}^* \mathrm{Fr}_p^*\gamma  \cup   \mathrm{ch}^{F^*\alpha_0+\alpha_1}(\CU_{\mathrm{dR}}) \right)\\
        &= \nu_* \int_{\mathrm{Fr}_p^*\gamma} \mathrm{ch}^{F^*\alpha_0+\alpha_1}(\CU_{\mathrm{dR}}).
    \end{split}
    \]
 
By  integrating the right-hand side of (\ref{zz2}), and by using
 that $\nu_*$ and $\nu^*$ are mutual inverses, we finally get the following identity for every $\alpha_0$ and $\gamma:$
  \begin{equation}\label{zz4}
 \int_{\mathrm{Fr}_p^* \gamma}  \mathrm{ch}^{F^* \alpha_0 +\alpha_1} (\CU_{\mathrm{dR}})
= 
\nu^*  [ p\cdot  \int_{\gamma} \mathrm{ch}^{\alpha_0+\beta} (\CU_{\mathrm{Dol}})].
\end{equation}

\subsection{Proof of Theorem \ref{thm3.1}}\label{pf3.1}
Recall that the isomorphism (b) in $(\ref{cocorr-b})$ is $\nu^*$ for which we have the identity (\ref{zz4}).

We claim that there is a necessarily unique class $\alpha_0$
of the form (\ref{form}) such that the two Chern characters in the first and third term of (\ref{zz2})  are \emph{simultaneously} normalized.

We pick $\alpha_0$ to be the unique class of the form (\ref{form}) so that $\mathrm{ch}^{F^*\alpha_0+ \alpha_1}_1(\mathcal{U}_{dR})$
lies in the K\"unneth component
\[
H^1(C_p)\otimes H^1(N_{\mathrm{dR}}(C_p,dp)) \subset H^2(C_p \times N_{\mathrm{dR}}(C_p,dp)),
\]
and consequently  $\mathrm{ch}^{F^*\alpha_0 +\alpha_1}(\mathcal{U}_{dR})$ is normalized.  To do so, we use that $F$ is a product and that  $F^*$ is an isomorphism.

Since $F=\mathrm{Fr}_p\times\nu,$ we have that $F_*$ respects the K\"unneth components. Therefore the class $\mathrm{ch}^{\beta+\alpha_0}(\mathcal{U}_{\mathrm{Dol}})$ is also normalized, and our claim is proved.

Since both Chern characters are normalized, it follows from (\ref{zz4}) that the isomorphism $\nu^*$ respects tautological classes. Moreover, in view of (\ref{zz4}), we obtain
 the following explicit expression for the isomorphism $\widetilde{\Theta}_M$ evaluated on the tautological classes
\[\widetilde{\Theta}_M(c(\gamma,k))=\Theta_M(c(\gamma,k))=\nu^* \int_{\gamma} \mathrm{ch}^{\beta+\alpha_0}(\CU_{\mathrm{Dol}})=  \int_{\frac{1}{p}\cdot\mathrm{Fr}_p^*\gamma} \mathrm{ch}^{F^*\alpha_0+\alpha_1}(\CU_{\mathrm{dR}})=c(p^{-1}\mathrm{Fr}_p^*\gamma,k).\]
Here we abuse the notation by omitting the horizontal isomorphisms in (\ref{cocorr-b}), which all respect 
normalized and tautological classes.
\qed

\section{Proofs of the main theorems and of Deligne splittings}

\subsection{Proof of Theorem \ref{main_thm}}\label{Sec5.1}
In view of Proposition \ref{prop2.4}, it suffices to show that there are infinitely many $A \in \CG$ with distinct $\lambda_A \in \BG_m$.

We first note that, if for one complex curve $C$ of genus $g\geq 2$ we find an isomorphism of the form $G_A$ as in Section \ref{Good}, then by composing with parallel transport operators we obtain an isomorphism of the same form for any nonsingular complex curve of genus $g$. Therefore, we may choose any curve to construct elements in $\CG$.

For any prime $p>n$ with
\begin{equation}\label{mod}
dp = d'~~~~ \textup{mod}~~~n,
\end{equation}
we work with $C$ and its reduction  $C_p$ as in Section \ref{Sec3.3}. By Proposition \ref{prop1.3} and Theorem \ref{thm3.1}, we obtain that
\[
A_p = p^{-1}  \mathrm{Fr}_p^* \in \CG ,\quad \textup{with}~~~ \lambda_{A_p} = p^{-1} \in \BG_m.
\]
The theorem follows since there are infinitely many primes $p$ satisfying (\ref{mod}). \qed

\subsection{Proofs of Theorems \ref{thm0.2} and \ref{thm0.4}}\label{5.2}

Theorem \ref{thm0.2} is deduced from Theorem \ref{main_thm}, since it follows from Proposition \ref{Prop1.2} that Galois conjugation (\ref{Galoi0}) coincides with the isomorphism ${\phi}_{d,d'}$ of Theorem \ref{main_thm} (b).

Now we prove Theorem \ref{thm0.4}. We consider the action of the 1-dimensional sub-group 
\[
\BT: = \{\lambda\mathrm{Id}_{2g}|~~\lambda \neq 0\} \subset \mathrm{GSp}(\Lambda_\BC)
\]
on the total cohomology $H^*(M_{\mathrm{Dol}}(C,d), \BC)$ induced by the action of Theorem \ref{main_thm} (a). We note that the weight decomposition of this $\BT$-action recovers the Hodge--Tate decomposition (\ref{Hodge-Tate}) via non-abelian Hodge (\ref{non-ab}). More precisely, we have
\[
\mathrm{Hdg}^i_j = \{\omega \in H^i(M_{\mathrm{Dol}}(C,d), \BC)|~~~ \lambda\cdot \omega = \lambda^{2j-i}\omega, ~~\forall \lambda \in \BT\};
\]
this follows directly from the explicit description of the $\BT$-action on the tautological classes:
\[
\lambda \cdot c(\gamma,k) = \lambda^{2-e}c(\gamma,k),\quad \quad \forall \gamma\in H^e(C,\BC), ~~~ \forall \lambda \in \BT.
\]
Furthermore, by Theorem \ref{main_thm} (a) each piece of the perverse filtration preserves the $\BT$-action. Hence the weight decomposition of $\BT$ on each piece $P_kH^i(M_{\mathrm{Dol}}(C,d), \BC)$ induces the desired decomposition
\[
P_kH^i(M_{\mathrm{Dol}}(C,d), \BC) = \bigoplus_{j}  \left(P_kH^i(M_{\mathrm{Dol}}(C,d), \BC) \cap \mathrm{Hdg}^i_j \right).
\]
This completes the proof of Theorem \ref{thm0.4}. \qed

\subsection{Extension to Deligne splitting}\label{5.3}

As we will recall briefly, the perverse filtration on the cohomology $H^*(M_{\mathrm{Dol}}(C,d),\BC)$ admits a natural splitting, known as the first Deligne splitting \cite{dC,D}.  Conjecturally this splitting corresponds, via Non-Abelian Hodge Theory, with the Hodge-Tate splitting of the weight filtration on $H^*(M_B(C,d),\BC)$ in (\ref{Hodge-Tate}).   

We first review some background.  Suppose we are given a triple $(H, F, \eta)$ where $H$ is a $\BC$-algebra, $F$ an increasing filtration on $H$ concentrated in degrees $[0,2r]$, and $\eta$ is an element of $H$.  We say that $\eta$ is an $F$-Lefschetz class if (i) multiplication by $\eta$ maps $F_k H$ to $F_{k+2}H$ and (ii) multiplication by $\eta^i$ induces isomorphisms
$\eta^i: Gr^{F}_{r-i}H \simeq Gr^{F}_{r+i}H$.
An $F$-Lefschetz class $\eta$ on $H$ induces, by means of an explicit linear algebra construction, a natural splitting of $F$ called the first Deligne splitting \cite{dC}.  In our setting, $H= H^*(M_{\mathrm{Dol}}(C,d),\BC)$ is the cohomology of the Dolbeault moduli space equipped with its perverse filtration $P$, and the $P$-Lefschetz class $\eta$ is a degree $2$ cohomology class.  The first Deligne splitting satisfies the following natural compatibility. 
Suppose we are given two such triples $(H, F, \eta)$ and $(H', F', \eta')$ and a ring isomorphism $f: H \xrightarrow{\simeq} H'$, which is a filtered isomorphism, for which $f(\eta) = \eta'$.  It is clear that if $\eta$ is an $F$-Lefschetz class then $\eta'$ is an $F'$-Lefschetz class and moreover that $f$ preserves the corresponding Deligne splittings.   

This compatibility gives us the following corollary of our main theorem.
\begin{cor}\label{firstdsp}
The operators $G_A$ of Theorem \ref{main_thm} 
preserve the first Deligne splittings for $H^*(M_{\mathrm{Dol}}(C,d),\BC)$.
\end{cor}
\begin{proof}
As explained in \cite[Remark 3.5]{dCMS}, in the case of $H^*(M_{\mathrm{Dol}}(C,d),\BC)$, the first Deligne splitting is \emph{independent} of the choice of $P$-Lefschetz class $\eta$.  In other words, we have naturally defined subspaces $S_j \subset H$ for which
$$P_k = \bigoplus_{j \leq k} S_j.$$
By the compatibility stated before the proposition, given two Dolbeault moduli spaces $M_{\mathrm{Dol}}(C,d)$ and 
$M_{\mathrm{Dol}}(C,d')$, 
any ring isomorphism 
$$
H^*(M_{\mathrm{Dol}}(C,d),\BC) \xrightarrow{\simeq} H^*(M_{\mathrm{Dol}}(C,d'),\BC),
$$
which is a filtered isomorphism, automatically preserves the first Deligne splitting and the graded pieces $S_j$.
In particular, this applies to the operators $G_A$ constructed in this paper.  
\end{proof}


\begin{thebibliography}{99}



\bibitem{A} N. A'Campo, {\em Tresses, monodromie et le groupe symplectique,} Comment. Math. Helv. 54 (1979), no. 2, 318--327.











\bibitem{BBD} A. A. Be\u{\i}linson, J. Bernstein, and P. Deligne, {\em Faisceaux pervers,} Analysis and topology on singular spaces, I (Luminy, 1981), 5--171, Ast\'erisque, 100, Soc. Math. France, Paris, 1982.


\bibitem{BMR} R. Bezrukavnikov, I. Mirkovi\'c, and D. Rumynin, {\em Localization of modules for a semisimple Lie algebra in prime characteristic,} Ann. of Math. (2)
167 (2008), no. 3, 945--991, With an appendix by Bezrukavnikov and Simon Riche. 













\bibitem{dC} M. A. de Cataldo, {\em Hodge-theoretic splitting mechanisms for projective maps,} with an appendix containing a letter from P. Deligne, J. Singul. 7 (2013), 134--156.


\bibitem{dCHM1} M. A. de Cataldo, T. Hausel, and L. Migliorini, {\em Topology of Hitchin systems and Hodge theory of character varieties: the case $A_1$,} Ann. of Math. (2) 175 (2012), no.~3, 1329--1407.


\bibitem{dCM} M. A. de Cataldo and D. Maulik, {\em The perverse filtration for the Hitchin fibration is locally constant,} arXiv:1808.02235.

\bibitem{dC_SL} M.A. de Cataldo, {\em A support theorem for the Hitchin fibration: the case of $\mathrm{SL}_n$,} Compos.Math. 153 1316--1347.




\bibitem{dCMS} M. A. de Cataldo, D. Maulik, and J. Shen, {\em Hitchin fibrations, abelian surfaces, and the $P=W$ conjecture,} arXiv:1909.11885.

















\bibitem{dCZ0} M.A. de Cataldo, S. Zhang, {\em Projective completion of   moduli of $t$-connections on curves in positive and mixed characteristic,} arxiv:2104.12209.

\bibitem{dCZ}  M. A. de Cataldo, S. Zhang, {\em A cohomological non-abelian Hodge theorem in positive characteristic,} arxiv:2104.12970.





\bibitem{Chen-Zhu} T.-H. Chen, X. Zhu, Non-Abelian Hodge Theory for algebraic curves in characteristic p, Geom.
Funct. Anal. Vol. 25 (2015) 1706--1733.



\bibitem{CDP} W. Y. Chuang, D. E. Diaconescu, and G. Pan, {\em BPS states and the P=W Conjecture,} Moduli spaces, 132--150, London Math. Soc. Lecture Note Ser., 411, Cambridge Univ. Press, Cambridge, 2014.












\bibitem{D} P. Deligne, {D\'ecompositions dans la cat\'egorie d\'eriv\'ee,} Motives (Seattle, WA, 1991), 115--128, Proc. Sympos. Pure Math., 55, Part 1, Amer. Math. Soc., Providence, RI, 1994.












\bibitem{Fulton} W. Fulton, {\em Intersection theory,} Princeton University Press, 2016.











\bibitem{Groch} M. Groechenig, {\em Moduli of flat connections in positive characteristic,} Math. Res. Lett. 23 (2016), no.
4, 989--1047.























\bibitem{HT1} T. Hausel, M. Thaddeus, {\em Generators for the cohomology ring of the moduli space of rank 2 Higgs bundles,} Proc. London Math. Soc. (3) 88 (2004), no. 3, 632--658. 



\bibitem{Survey} T. Hausel, {\em Global topology of the Hitchin system,} Handbook of moduli. Vol. II, 29--69, Adv. Lect. Math. (ALM), 25, Int. Press, Somerville, MA, 2013.

\bibitem{Heinloth} J. Heinloth, {\em Lectures on the moduli stack of vector bundles on a curve,} Affine flag manifolds and principal bundles, 123--153, Trends Math., Birkhäuser/Springer Basel AG, Basel, 2010.





\bibitem{Hit} N. J. Hitchin, {\em The self-duality equations on a Riemann surface,} Proc. London Math. Soc. (3) 55 (1987), no. 1, 59--126.

\bibitem{Hit1} N. J. Hitchin, {\em Stable bundles and integrable systems,} Duke Math. J. 54 (1987) 91--114.



























\bibitem{LP} Y Laszlo and C Pauly, {\em On the Hitchin morphism in positive characteristic,} Internat. Math. Res. Notices (2001), no. 3, 129--143.






















\bibitem{Markman} E. Markman, {\em Generators of the cohomology ring of moduli spaces of sheaves on symplectic surfaces,} J. Reine Angew. Math. 544 (2002), 61--82. 












\bibitem{Mellit} A. Mellit, {\em Poincar\'e polynomials of moduli spaces of Higgs bundles and character varieties (no punctures),} Invent. Math.221, (2020), 301--327.













(2013), 2353--2367.
















 













\bibitem{Shende} V. Shende, {\em The weights of the tautological classes of character varieties,} Int. Math. Res. Not. 2017, no. 22, 6832--6840.








\bibitem{Si1994II} C. T. Simpson, {\em Moduli of representations of the fundamental group of smooth projective varieties II,} Publ. Math. IHES.  No. 80 (1994), 5--79.





















\end{thebibliography}
\end{document}